\documentclass[smallextended,envcountsect]{svjour3}
\smartqed
\usepackage[margin=25mm]{geometry}
\usepackage{enumerate}
\usepackage{amsmath,amssymb,amsfonts}
\usepackage{mathtools}
\usepackage{graphicx}
\usepackage{color}
\usepackage{subfig}
\usepackage{url}
\usepackage{hyperref}
\usepackage{booktabs}
\usepackage{multirow}

\newcommand{\st}{{\rm s.~t.}}

\newcommand{\vt}[1]{\boldsymbol{#1}}
\newcommand{\bb}[1]{\mathbb{#1}}

\makeatletter
\@ifundefined{spnewtheorem}%
  {\newtheorem{assumption}[theorem]{Assumption}}%
  {\spnewtheorem{assumption}[theorem]{Assumption}{\bfseries}{\itshape}}
\makeatother

\journalname{}

\begin{document}

\title{A Unifying View of Distributionally Robust Optimization for Probability Smoothing via Maximum Entropy}

\titlerunning{DRO Smoothing via Maximum Entropy}        

\author{Kota~Kurihara \and Yuya~Yamakawa \and Yoichi~Izunaga}

\institute{Kota~Kurihara \at
              Kyushu University \\
              Fukuoka, Japan\\
              kurihara.kota.881@s.kyushu-u.ac.jp
           \and
              Yuya Yamakawa \at
              Tokyo Metropolitan University \\
              Tokyo, Japan\\
              yuya@tmu.ac.jp
            \and
              Yoichi Izunaga \at
              Kyushu University \\
              Fukuoka, Japan\\
              izunaga@econ.kyushu-u.ac.jp
}

\date{Received: date / Accepted: date}

\maketitle

\begin{abstract}
The probability smoothing problem seeks estimators that assign positive probability to all categories, including those unobserved in a finite sample.
In our previous work, we established that the estimator derived from distributionally robust optimization (DRO) with a $q$-norm ambiguity set ($q$-DRO) satisfies the axioms of Positivity, Symmetry, and Order Preservation, while relaxing the rigid Ratio Preservation axiom characterizing Laplace smoothing.
In this paper, we observe that Symmetry itself imposes a strong constraint: it forces all unobserved categories to receive identical probability regardless of their relation to observed categories.
Motivated by this observation, we study DRO with an optimal transport ambiguity set (OT-DRO), which incorporates inter-category dissimilarities through a cost matrix and thereby enables structure-respecting smoothing.
We derive a tractable dual reformulation of OT-DRO as an empirical cross-entropy minimization with a Lipschitz-type penalty on the log-probability vector, and we identify an explicit threshold on the robustness radius at which the optimal estimator reduces to the uniform distribution, paralleling the threshold established for $q$-DRO.
We then show, through Sion's minimax theorem, that DRO for probability smoothing over any compact convex ambiguity set that contains a strictly positive distribution is equivalent to entropy maximization over that set.
This unifying perspective explains the parallel threshold structure in $q$-DRO and OT-DRO.
\keywords{Distributionally robust optimization \and Probability smoothing \and Maximum entropy \and Optimal transport}
\end{abstract}

\section{Introduction}
\label{sec:intro}
The zero-frequency problem arises whenever probabilities must be estimated from a finite sample in which some categories are unobserved.
A classical remedy is Laplace smoothing, which assigns a uniform pseudocount to every category.
Sakai~\cite{S_MSS2025} recently characterized Laplace smoothing as the unique estimator satisfying four axioms: Positivity, Symmetry, Order Preservation, and Ratio Preservation. While this characterization provides a principled foundation, the Ratio Preservation axiom imposes a strong linear structure that can be inappropriate for complex applications.

A flexible alternative is distributionally robust optimization (DRO), in which the estimator is chosen to minimize the worst-case loss over an ambiguity set of distributions centered at the empirical distribution.
The duality theory and tractable reformulations of DRO have been developed extensively~\cite{BN_RO2009,KSW_DRO2025,MK_MM2018,BM_MOR2019,GK_MOR2023}, with the connection to regularized empirical loss minimization now well understood~\cite{SMPK_NIPS2015,DN_AS2021}.

In our previous work~\cite{IKNU_ORL2026}, we studied DRO with a $q$-norm ambiguity set ($q$-DRO) for probability smoothing.
The resulting estimator was shown to satisfy Positivity, Symmetry (for $q \in [1, \infty]$) and Order Preservation (for $q \in (1,\infty)$), while relaxing Ratio Preservation.
The analysis proceeded through the KKT conditions of a conic reformulation, and identified the specific regularization structure induced by the $q$-norm ambiguity set as an empirical cross-entropy minimization with a $q^*$-norm penalty on the log-probability vector.

Among the axioms preserved by $q$-DRO, Symmetry, which requires that categories with identical empirical frequencies receive identical probabilities, deserves closer scrutiny.
From an axiomatic standpoint, Symmetry is regarded as a natural fairness condition.
When read in terms of its practical content, however, it imposes a strong constraint: since all unobserved categories share the empirical frequency $\hat{p}_j = 0$,
Symmetry forces any estimator satisfying it to assign the same probability to every unobserved category.
Laplace smoothing obeys this prescription by construction, and $q$-DRO, despite inducing a different regularization structure, inherits the same symmetric treatment.

In many applications, however, unobserved events are not genuinely interchangeable.
In natural language processing, an unseen $n$-gram composed of semantically related words should be considered more likely than an unseen $n$-gram of unrelated words.
Optimal transport provides a principled way to encode such structural information into the ambiguity set through a cost matrix $C = (c_{ij})$ specifying pairwise dissimilarity between categories.
The resulting DRO formulation (OT-DRO) breaks Symmetry by design,
but respects the geometric structure of the categories.

\paragraph{Contributions.}
Building on these motivations, our contributions are threefold.
\begin{enumerate}
  \item \textbf{A unifying maximum entropy principle.}
  We show that, for probability smoothing, distributionally robust optimization over any compact convex ambiguity set that contains a strictly positive distribution is equivalent to maximum entropy estimation over that ambiguity set. This equivalence follows from Sion's minimax theorem~\cite{S_PJM1958} and can be viewed as a specialization of the general minimax--entropy duality developed by Gr\"unwald and Dawid~\cite{GD_AS2004} to the cross-entropy loss on the probability simplex. While the underlying duality is classical, our contribution is to reveal its structural implication for distributionally robust probability smoothing, providing a unified view of a broad class of DRO models.

  \item \textbf{OT-DRO for probability smoothing.}
  As an important instance of the above framework, we formulate an optimal transport distributionally robust optimization (OT-DRO) model for probability smoothing under the cross-entropy loss. Exploiting the duality for optimal transport ambiguity sets, we derive an equivalent tractable reformulation as an empirical cross-entropy minimization problem with a Lipschitz-type penalty on the log-probability vector.

  \item \textbf{A unified characterization of threshold phenomena.}
  As a consequence of the above maximum entropy principle, we show that both norm-based DRO and OT-DRO exhibit the same threshold behavior: the optimal estimator coincides with the uniform distribution if and only if the uniform distribution belongs to the ambiguity set. In particular, for OT-DRO, we derive the explicit threshold $d_{\mathrm{OT}}(\vt{1}_n/n,\hat{\vt{p}})$, which parallels the threshold $\|\vt{1}_n/n-\hat{\vt{p}}\|_q$ for $q$-DRO. Consequently, in both formulations, the threshold is the distance from the empirical distribution to the uniform distribution measured by the discrepancy defining the ambiguity set.
\end{enumerate}

The paper is organized as follows. Section~\ref{sec:pre} fixes notation, recalls the DRO framework for probability smoothing, and summarizes the results of~\cite{IKNU_ORL2026} for $q$-DRO. Section~\ref{sec:otdro} develops the OT-DRO formulation and establishes its threshold characterization. Section~\ref{sec:unified} presents the unifying maximum entropy interpretation.
Section~\ref{sec:experiments} reports numerical experiments,
and Section~\ref{sec:conclusion} concludes the paper.

\section{Preliminaries}
\label{sec:pre}
\subsection{Axiomatic Analysis and DRO Framework for Probability Smoothing}

Let $N = \{1, 2, \ldots, n\}$ be a set of categories, and let $\Delta^n = \{\vt{p} \in \mathbb{R}^n \mid \sum_{j=1}^n p_j = 1,\, p_{j}\ge 0 \}$ denote the probability simplex. Given an empirical distribution $\hat{\vt{p}} \in \Delta^n$, the smoothing problem seeks an estimator $\vt{x}$ in the relative interior of $\Delta^n$, which we denote by $\operatorname{ri}(\Delta^{n})=\{\vt{x} \in \Delta^{n} \mid x_{j}>0 \text{ for all $j \in N$}\}$.
The motivating situation is the zero-frequency problem, in which $\hat{\vt{p}}$ has at least one zero component and is therefore unusable as an estimator; the theory developed below, however, requires no such assumption and applies to an arbitrary $\hat{\vt{p}} \in \Delta^{n}$.

Laplace smoothing estimates $x_j=(\hat{p}_{j}+a)/(1+na)$ with a smoothing parameter $a>0$, which amounts to adding the same pseudocount to every category.
Sakai~\cite{S_MSS2025} characterized Laplace smoothing by four axioms: Positivity ($x_j > 0$ for all $j \in N$), Symmetry ($\hat{p}_i = \hat{p}_j \Rightarrow x_i = x_j$), Order Preservation ($\hat{p}_i < \hat{p}_j \Rightarrow x_i < x_j$), and Ratio Preservation.
The previous work~\cite{IKNU_ORL2026} showed that the $q$-DRO estimator (defined below) relaxes Ratio Preservation while retaining the other three axioms for $q \in (1,\infty)$.

We adopt the cross-entropy loss $\mathcal{L}(\vt{x}, \vt{p}) = \sum_{j=1}^n p_j(-\log x_j)$, which is a standard choice for probability estimation through its connection to maximum likelihood estimation and the Kullback--Leibler divergence~\cite{CT_EIT2006}.
We consider the DRO problem
\begin{align}
\label{eq:DRO}
\min_{\vt{x} \in \Delta^n} \max_{\vt{p} \in \mathcal{U}(\hat{\vt{p}}, \varepsilon)} \sum_{j=1}^n p_j (-\log x_j),
\end{align}
where $\mathcal{U}(\hat{\vt{p}}, \varepsilon) \subseteq \Delta^n$ is an ambiguity set and $\varepsilon > 0$ is the robustness radius.
We assume the following condition on the ambiguity set.

\begin{assumption}
\label{asm:ambiguity}
The ambiguity set $\mathcal{U}(\hat{\vt{p}},\varepsilon)$ is a nonempty compact convex subset of $\Delta^{n}$ such that $\mathcal{U}(\hat{\vt{p}},\varepsilon) \cap \operatorname{ri}(\Delta^{n})\neq \emptyset$.
\end{assumption}

This mild condition is satisfied by both ambiguity sets considered in this paper, as we verify in Remark~\ref{rem:qdro-set} and Lemma~\ref{lem:otdro-set}.
Under Assumption~\ref{asm:ambiguity}, the DRO problem~\eqref{eq:DRO} admits an optimal solution, which we prove in Theorem~\ref{thm:unified}.
Here and below, $\vt{1}_{n}$ denotes the $n$-dimensional vector of all ones, so that $\vt{1}_{n}/n$ is the uniform distribution.

Throughout the paper, we regard $-\log x_j$ as an extended-real-valued function on $[0,1]$ and adopt the conventions $-\log 0 = +\infty$ and $0 \cdot (-\log 0) = 0$;
the loss $\mathcal{L}(\vt{x}, \vt{p})$ thus takes the value $+\infty$ whenever $x_j = 0$ and $p_j > 0$ for some $j \in N$.
Note also that $x_{j} \le 1$ implies $-\log x_{j} \ge 0$, so every term of $\mathcal{L}(\vt{x},\vt{p})$ is nonnegative.
The first convention structurally enforces the goal of smoothing.
We also write
\begin{align*}
  H(\vt{p}) = \sum_{j=1}^{n}p_{j}(-\log p_{j}) \qquad (\vt{p} \in \Delta^{n})
\end{align*}
for the Shannon entropy; by the convention $0\cdot(-\log 0)=0$, $H(\vt{p}) = \mathcal{L}(\vt{p},\vt{p})$ is well-defined and continuous on $\Delta^{n}$.

\subsection{Review of $q$-DRO Model}
Throughout this subsection, we assume $\varepsilon>0$ and $q \in [1,\infty]$.
The $q$-norm ambiguity set, introduced in~\cite{IKNU_ORL2026}, is
\begin{align*}
\mathcal{U}_q(\hat{\vt{p}}, \varepsilon) = \left\{\vt{p} \in \Delta^n \mid \|\vt{p} - \hat{\vt{p}}\|_q \leq \varepsilon\right\},
\end{align*}

\begin{remark}
\label{rem:qdro-set}
The set $\mathcal{U}_q(\hat{\vt{p}}, \varepsilon)$ satisfies Assumption~\ref{asm:ambiguity}.
Indeed, it contains $\hat{\vt{p}}$ and is the intersection of the compact convex set $\Delta^{n}$ with a closed norm ball, hence compact and convex.
Moreover, the point $\vt{p}^{(t)} = (1-t)\hat{\vt{p}} + t\,\vt{1}_{n}/n$ has strictly positive components for every $t \in (0,1]$ and satisfies $\|\vt{p}^{(t)} - \hat{\vt{p}}\|_{q} = t\,\|\vt{1}_{n}/n - \hat{\vt{p}}\|_{q} \le \varepsilon$ for sufficiently small $t>0$, so that $\vt{p}^{(t)} \in \mathcal{U}_q(\hat{\vt{p}}, \varepsilon)\cap \operatorname{ri}(\Delta^{n})$ for such $t$.
\end{remark}

Introducing a perturbation vector $\vt{e} = (e_j)_{j \in N}$ to represent the deviation from the empirical distribution ($p_{j}=\hat{p}_{j}+e_{j}$),
we obtain the following equivalent reformulation of the ambiguity set $\mathcal{U}_q(\hat{\vt{p}}, \varepsilon)$, where $\sum_{j=1}^{n} e_j = 0$ corresponds to the normalization $\sum_{j=1}^{n} p_j = 1$ and $\hat{p}_j + e_j \ge 0$ to the nonnegativity $p_j \ge 0$:
\begin{align*}
  \hat{p}_{j}+e_{j} & \ge 0 \text{ for $j \in N$},\quad (\lambda_{j} \ge 0)\\
  \sum_{j=1}^{n}e_{j}&=0, \quad (\beta \in \bb{R})\\
  \|\vt{e}\|_{q} &\le \varepsilon.
\end{align*}
Here, the variables shown in parentheses to the right of the first two constraints denote the Lagrange multipliers associated with them, which are used in the dual reformulation of the inner problem.

The $q$-DRO model is problem~\eqref{eq:DRO} with $\mathcal{U} = \mathcal{U}_q(\hat{\vt{p}}, \varepsilon)$:
\begin{align}
\label{eq:qdro}
\text{$q$-DRO:}\qquad
\min_{\vt{x} \in \Delta^n} \max_{\vt{p} \in \mathcal{U}_q(\hat{\vt{p}}, \varepsilon)} \sum_{j=1}^n p_j(-\log x_j).
\end{align}

The $q$-DRO problem~\eqref{eq:qdro} was reformulated in~\cite{IKNU_ORL2026} via Lagrangian duality, as summarized in the following theorem.
\begin{theorem}[$q$-DRO reformulation~\cite{IKNU_ORL2026}]
\label{thm:qdro-equiv}
A solution $\vt{x}^*$ is optimal to the $q$-DRO problem~\eqref{eq:qdro} if and only if there exists $\beta^* \in \mathbb{R}$ such that $(\vt{x}^*, \beta^*)$ is an optimal solution of
\begin{align}
\label{eq:qdro-simplified}
\min_{\vt{x} \in \Delta^n,\, \beta \in \bb{R}} \sum_{j=1}^n \hat{p}_j (-\log x_j) + \varepsilon \|-\log(\vt{x}) - \beta \vt{1}_{n}\|_{q^*},
\end{align}
where $q^*$ is the H\"older conjugate of $q$, i.e., $1/q+1/q^{*}=1$. Moreover, the two problems share the same optimal value.
\end{theorem}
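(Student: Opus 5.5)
The plan is to dualize the inner maximization in \eqref{eq:qdro} for each fixed $\vt{x}$ and then merge the resulting outer minimization over dual variables with the minimization over $\vt{x}$. For $\vt{x}\in\operatorname{ri}(\Delta^n)$, write $\vt{c}=-\log(\vt{x})\in\bb{R}^n$ (finite). Using the perturbation form $\vt{p}=\hat{\vt{p}}+\vt{e}$, the inner problem becomes
\begin{align*}
\sum_{j=1}^n\hat p_j c_j+\max\Bigl\{\vt{c}^\top\vt{e} \;\Bigm|\; \hat{\vt{p}}+\vt{e}\ge \vt{0},\ \vt{1}_n^\top\vt{e}=0,\ \|\vt{e}\|_q\le\varepsilon\Bigr\}.
\end{align*}
This is a convex program. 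Slater's condition holds because $\vt{e}=t(\vt{1}_n/n-\hat{\vt{p}})$ with small $t>0$ satisfies the norm constraint strictly, and the linear constraints need no strictness (compare Remark~\ref{rem:qdro-set}). Hence strong duality holds. Dualizing the equality with $\beta$ and the nonnegativity with $\vt\lambda\ge\vt 0$, and using $\max_{\|\vt e\|_q\le\varepsilon}\vt{v}^\top\vt{e}=\varepsilon\|\vt v\|_{q^*}$, gives the dual
\begin{align*}
\min_{\beta\in\bb{R},\,\vt\lambda\ge \vt 0}\ \sum_j \hat p_j c_j+\vt\lambda^\top\hat{\vt p}+\varepsilon\|\vt c-\beta\vt 1_n+\vt\lambda\|_{q^*}.
\end{align*}

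The main step, and the one I expect to be the real obstacle, is eliminating $\vt\lambda$. The goal is to show that the minimum over $(\beta,\vt\lambda)$ equals the minimum over $\beta$ alone with $\vt\lambda=\vt 0$, which yields exactly the objective of \eqref{eq:qdro-simplified}. The inequality "$\le$" is immediate by taking $\vt\lambda=\vt 0$. For "$\ge$" I would argue directly on the primal rather than on the dual. Relaxing the constraint $\hat{\vt p}+\vt e\ge\vt 0$ can only increase the inner maximum, and that relaxed maximum is $\sum_j\hat p_jc_j+\varepsilon\min_\beta\|\vt c-\beta\vt 1_n\|_{q^*}$ (the norm of $\vt c$ in the quotient by $\vt 1_n$). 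So the relaxed problem is an upper bound, and the task is to prove the bound is attained after the outer minimization over $\vt x$. This does not hold pointwise in $\vt{x}$, so I would use the optimality structure. At an optimum of \eqref{eq:qdro-simplified}, the KKT conditions in $\vt x$ (as in \cite{IKNU_ORL2026}) give an explicit worst-case perturbation $\vt e^*$, namely $\varepsilon$ times a normalized dual vector of $\vt c-\beta^*\vt 1_n$. Stationarity in $x_j$ reads $(\hat p_j+e^*_j)/x_j=\mu$, so $\hat{\vt p}+\vt e^*=\mu\vt x^*>\vt 0$. Thus the relaxed maximizer is feasible, and the saddle point $(\vt x^*,\hat{\vt p}+\vt e^*)$ is a saddle point of the original problem.

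Concretely, I would define $G(\vt x)$ as the objective of \eqref{eq:qdro-simplified} minimized over $\beta$, and $F(\vt x)$ as the $q$-DRO inner maximum. Then $F\le G$ everywhere. At a minimizer $\vt x^*$ of $G$, the construction above gives $F(\vt x^*)=G(\vt x^*)$ and $\vt p^*:=\hat{\vt p}+\vt e^*\in\mathcal U_q(\hat{\vt{p}},\varepsilon)$. Moreover $\vt x^*$ minimizes $\mathcal L(\cdot,\vt p^*)$, since $\vt x^*=\vt p^*$ by $\mu=1$. Therefore $\min F\ge\min_{\vt x}\mathcal L(\vt x,\vt p^*)=F(\vt x^*)=G(\vt x^*)=\min G$, and the optimal values coincide.

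For the "iff" on solutions: if $\vt x'$ is optimal for \eqref{eq:qdro}, then $G(\vt x')\ge F(\vt x')=\min G$. Equality is still needed. Here I would use that any optimal $\vt x'$ must equal the unique $\vt p^*$. This follows from strict convexity of $-\log$, so cross-entropy against a fixed $\vt p^*$ has the unique minimizer $\vt p^*$ and $F(\vt x')\ge \mathcal L(\vt x',\vt p^*)>\mathcal L(\vt p^*,\vt p^*)$ unless $\vt x'=\vt p^*$. Hence the solution sets coincide, with $\beta^*$ the minimizing $\beta$. Points on the boundary of $\Delta^n$ are handled by noting that both objectives are $+\infty$ there: $F$ because $\vt p^*\in\operatorname{ri}(\Delta^n)$ lies in the set, and $G$ because the norm term blows up. So they never affect optimality.
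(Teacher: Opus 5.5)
Your proof is correct, but it takes a different route from the one the paper relies on. The paper does not reprove this theorem; it imports it from \cite{IKNU_ORL2026}. There, the inner problem is dualized with multipliers $\beta$ and $\lambda_j\ge 0$ (for $\hat p_j+e_j\ge 0$), the dual is merged with the outer minimization, and a KKT analysis of the resulting conic program shows $\vt\lambda=\vt 0$ at optimality. Section~\ref{sec:unified} gives a second reason for $\vt\lambda=\vt 0$: complementary slackness against the worst case $\hat{\vt p}+\vt e^*=\vt p^*=\vt x^*>\vt 0$, whose positivity comes from Theorem~\ref{thm:unified} and hence from Sion's theorem.

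You never need $\vt\lambda$. Your Lagrangian dual in the first step is in fact unused, since H\"older alone gives $F\le G$ pointwise. Instead, you read the worst-case perturbation off the KKT system of the simplified nonconvex problem itself. There $\varepsilon\vt s=\vt x^*-\hat{\vt p}$, the same identity the paper derives in Appendix~\ref{app:threshold_qdro}, which shows the relaxed maximizer is feasible. So $(\vt x^*,\vt x^*)$ is a saddle point, and Gibbs' inequality delivers both the equality of optimal values and the coincidence of solution sets.

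What this buys is a self-contained argument that bypasses Sion and recovers $\vt x^*=\vt p^*$ for $q$-DRO directly. What it costs is reliance on the nonsmooth chain rule (KKT as necessary conditions for a nonconvex objective). It also depends on the existence of a minimizer of \eqref{eq:qdro-simplified}, which you assume rather than prove.

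Close that gap as follows. $G$ is continuous on $\operatorname{ri}(\Delta^n)$. Since $\|\cdot\|_{q^*}\ge\|\cdot\|_\infty$ and $\max_j x_j\ge 1/n$, it satisfies $G(\vt x)\ge\frac{\varepsilon}{2}\bigl(-\log\min_j x_j-\log n\bigr)$. Hence $G$ blows up as $\vt x$ approaches the relative boundary. This is stronger than $G=+\infty$ on the boundary, and it is what the compactness argument actually needs.
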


The reformulation~\eqref{eq:qdro-simplified} exhibits $q$-DRO as an empirical cross-entropy minimization with a $q^*$-norm penalty on the deviation of the log-probability vector $-\log(\vt{x})$ from a common baseline $\beta$.
We note that the penalty term is a nonconvex function of $\vt{x}$; nevertheless, since it is the composition of the finite convex function $\|\cdot\|_{q^{*}}$ with the continuously differentiable map $(\vt{x},\beta) \mapsto -\log(\vt{x})-\beta\vt{1}_{n}$, the subdifferential chain rule applies, its qualification condition holding automatically since $\|\cdot\|_{q^{*}}$ is finite valued (see~\cite[Thm.~10.6]{RW_VA1998}), and the resulting KKT conditions serve as necessary conditions for optimality.

A threshold condition on the robustness radius is also established.
\begin{theorem}[$q$-DRO threshold]
\label{thm:qdro-threshold}
The uniform distribution $\vt{x} = \vt{1}_n/n$ is an optimal solution to $q$-DRO if and only if $\|\vt{1}_n/n - \hat{\vt{p}}\|_q \le \varepsilon$.
\end{theorem}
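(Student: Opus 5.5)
The plan is to prove the statement directly from the reformulation in Theorem~\ref{thm:qdro-equiv}, without yet invoking the maximum entropy principle of Section~\ref{sec:unified}. By Theorem~\ref{thm:qdro-equiv}, $\vt{1}_n/n$ is optimal for $q$-DRO if and only if there is $\beta^*$ such that $(\vt{1}_n/n,\beta^*)$ minimizes
\begin{align*}
F(\vt{x},\beta)=\sum_{j=1}^n \hat{p}_j(-\log x_j)+\varepsilon\|-\log(\vt{x})-\beta\vt{1}_n\|_{q^*}.
\end{align*}
At $\vt{x}=\vt{1}_n/n$ we have $-\log(\vt{x})=(\log n)\vt{1}_n$. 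Choosing $\beta=\log n$ kills the penalty, so $F(\vt{1}_n/n,\log n)=\log n$. The task is therefore to decide when $\log n$ is the optimal value of~\eqref{eq:qdro-simplified}, or equivalently of~\eqref{eq:qdro}.

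\emph{Sufficiency.} Suppose $\|\vt{1}_n/n-\hat{\vt{p}}\|_q\le\varepsilon$. I will work with the original min--max. For any $\vt{x}\in\Delta^n$, the uniform distribution belongs to $\mathcal{U}_q(\hat{\vt{p}},\varepsilon)$, so
\begin{align*}
\max_{\vt{p}\in\mathcal{U}_q}\mathcal{L}(\vt{x},\vt{p})\ge \mathcal{L}(\vt{x},\vt{1}_n/n)=\frac1n\sum_j(-\log x_j)\ge\log n,
\end{align*}
by Jensen's inequality, or equivalently Gibbs' inequality. On the other hand, $\mathcal{L}(\vt{1}_n/n,\vt{p})=\log n$ for every $\vt{p}\in\Delta^n$. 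Hence the uniform distribution attains the lower bound and is optimal.

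\emph{Necessity.} Suppose $\|\vt{1}_n/n-\hat{\vt{p}}\|_q>\varepsilon$. I want some $\vt{x}$ whose worst-case loss is strictly below $\log n$. The natural route is a first-order argument around $\vt{x}=\vt{1}_n/n$ along $\vt{x}^{(t)}=\vt{1}_n/n+t\vt{d}$ with $\sum_j d_j=0$. Since $\mathcal{L}$ is linear in $\vt{p}$ and $\mathcal{U}_q$ is compact, Danskin's theorem gives the one-sided derivative of the worst-case loss at $t=0$ as
\begin{align*}
\max_{\vt{p}\in\mathcal{U}_q}\ -n\sum_j p_jd_j .
\end{align*}
This is negative precisely when there is $\vt{d}$ with $\sum_j d_j=0$ and $\langle\vt{p},\vt{d}\rangle>0$ for all $\vt{p}\in\mathcal{U}_q$. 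Write $\vt{u}=\vt{1}_n/n$. The set $\mathcal{U}_q$ is compact and convex and, by assumption, does not contain $\vt{u}$. Strict separation then gives $\vt{a}$ with $\langle\vt{a},\vt{p}\rangle>\langle\vt{a},\vt{u}\rangle$ for all $\vt{p}\in\mathcal{U}_q$. Set $\vt{d}=\vt{a}-\langle\vt{a},\vt{u}\rangle\vt{1}_n$. Then $\sum_j d_j=0$, and for $\vt{p}\in\Delta^n$ we get $\langle\vt{p},\vt{d}\rangle=\langle\vt{a},\vt{p}\rangle-\langle\vt{a},\vt{u}\rangle>0$. Because the maximum defining the derivative is over a compact set of strictly negative values, the derivative is strictly negative. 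For small $t>0$ the point $\vt{x}^{(t)}$ stays in $\operatorname{ri}(\Delta^n)$ and its worst-case loss falls below $\log n$, so $\vt{u}$ is not optimal.

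The main obstacle is justifying the directional derivative step. This needs the max-function derivative formula for a family $\vt{x}\mapsto\mathcal{L}(\vt{x},\vt{p})$ that is smooth near $\vt{u}$ uniformly in $\vt{p}\in\Delta^n$, together with a compact index set; both hold here because $\vt{u}$ is interior. If that becomes delicate, I will fall back on a direct estimate. Using $-\log(1/n+td_j)=\log n-ntd_j+O(t^2)$ uniformly in $j$ gives
\begin{align*}
\mathcal{L}(\vt{x}^{(t)},\vt{p})\le\log n-nt\min_{\vt{p}\in\mathcal{U}_q}\langle\vt{p},\vt{d}\rangle+Ct^2,
\end{align*}
which is below $\log n$ for small $t$.
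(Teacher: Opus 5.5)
Your proof is correct, and it takes a genuinely different route from the paper's. The paper works entirely inside the reformulation \eqref{eq:qdro-simplified} obtained from Theorem~\ref{thm:qdro-equiv}. For sufficiency it applies H\"older's inequality $(\vt{1}_n/n-\hat{\vt{p}})^{\top}\vt{v}\le\varepsilon\|\vt{v}\|_{q^*}$ with $\vt{v}=-\log(\vt{x})-\beta\vt{1}_n$, which reduces the objective to $\frac1n\sum_j(-\log x_j)\ge\log n$. For necessity it uses the generalized KKT conditions: after deriving $\gamma=1$, they give $\varepsilon\vt{s}=\vt{x}^*-\hat{\vt{p}}$ with $\vt{s}\in\partial\|\cdot\|_{q^*}(\vt{0})$, hence $\|\vt{s}\|_q\le 1$.

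Your sufficiency argument is the primal side of the same bound: inserting the feasible point $\vt{p}=\vt{1}_n/n$ into the inner maximum is exactly the weak-duality inequality that H\"older encodes. Your necessity argument replaces the KKT machinery with strict separation of $\vt{1}_n/n$ from $\mathcal{U}_q$ and a first-order descent estimate. It exploits the fact that $\mathcal{L}(\vt{1}_n/n,\cdot)\equiv\log n$, so every $\vt{p}\in\mathcal{U}_q$ is active.

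Your route is self-contained and more general. It does not actually need Theorem~\ref{thm:qdro-equiv} or a nonsmooth chain rule. It also does not borrow positivity of the optimizer from Theorem~\ref{thm:unified}, which the paper uses to avoid multipliers for $x_j\ge 0$. Since it uses nothing about $\mathcal{U}_q$ beyond compactness and convexity, it proves directly, without Sion's theorem or entropy, the statement of Section~\ref{sec:unified} that $\vt{1}_n/n$ is optimal if and only if it lies in the ambiguity set. It does not even need the relative-interior part of Assumption~\ref{asm:ambiguity}.

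The paper's route yields structural information from the reformulation instead. The relation $\varepsilon\vt{s}=\vt{x}^*-\hat{\vt{p}}$ holds at every optimum, not only the uniform one. The argument also mirrors the OT-DRO threshold proof, where the multipliers form a transport plan.

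Two minor points. First, your $O(t^2)$ fallback is already rigorous on its own, since the constant is uniform in $\vt{p}$ because $\sum_j p_j=1$, so you can drop Danskin's theorem. Second, the strict inequality $\langle\vt{p},\vt{d}\rangle>0$ holds for $\vt{p}\in\mathcal{U}_q$, not for all $\vt{p}\in\Delta^n$ as your sentence suggests. Only the identity $\langle\vt{p},\vt{d}\rangle=\langle\vt{a},\vt{p}\rangle-\langle\vt{a},\vt{u}\rangle$ holds on all of $\Delta^n$, and that is all you need.
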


The proof is given in Appendix~\ref{app:threshold_qdro}. This threshold corresponds to the radius at which the uniform distribution belongs to the ambiguity set, a perspective that will be made systematic in Section~\ref{sec:unified}.

\section{The OT-DRO Formulation}
\label{sec:otdro}
In this section, we apply the duality for optimal transport ambiguity sets to the probability smoothing problem. The duality itself is standard, and is usually stated for Wasserstein ambiguity sets~\cite{MK_MM2018,BM_MOR2019,GK_MOR2023}; what we wish to highlight is its specialization to the cross-entropy loss on the probability simplex, which yields an empirical cross-entropy minimization with a Lipschitz-type penalty on the log-probability vector and a threshold condition paralleling that of $q$-DRO.

The goal of this section is to formulate the OT-DRO model and to derive a tractable convex reformulation that can be solved by standard optimization software. We proceed in three steps.
First, we express the inner maximization as a linear programming problem over transport plans (Section~\ref{subsec:inner}).
Second, we take its linear-programming dual and combine it with the outer minimization, obtaining an equivalent reformulation (Section~\ref{subsec:dual}).
Finally, we analyze the resulting penalty structure and derive a threshold condition on the robustness radius (Section~\ref{subsec:properties}).

\subsection{The Inner Problem}
\label{subsec:inner}
The OT-DRO model is problem~\eqref{eq:DRO} with $\mathcal{U} = \mathcal{U}_{\mathrm{OT}}(\hat{\vt{p}}, \varepsilon)$:
\begin{align}
\label{eq:otdro}
\text{OT-DRO:}\qquad
\min_{\vt{x} \in \Delta^n} \max_{\vt{p} \in \mathcal{U}_{\mathrm{OT}}(\hat{\vt{p}}, \varepsilon)} \sum_{j=1}^n p_j(-\log x_j),
\end{align}
where the optimal transport ambiguity set is $\mathcal{U}_{\mathrm{OT}}(\hat{\vt{p}}, \varepsilon) = \{\vt{p} \in \Delta^n \mid d_{\mathrm{OT}}(\vt{p}, \hat{\vt{p}}) \leq \varepsilon\}$.

For $\vt{x} \in \Delta^{n}$, the inner maximization in~\eqref{eq:otdro} takes the form
\begin{align}
\label{eq:otdro-inner-p}
\max_{\vt{p} \in \Delta^n}~ \sum_{j=1}^n p_j(-\log x_j) \quad \st \quad d_{\mathrm{OT}}(\vt{p}, \hat{\vt{p}}) \leq \varepsilon.
\end{align}
The optimal transport discrepancy $d_{\mathrm{OT}}(\vt{p}, \hat{\vt{p}})$ measures the minimum cost of transporting the probability mass of $\hat{\vt{p}}$ to $\vt{p}$, where the transport cost is specified by a cost matrix $C = (c_{ij}) \in \mathbb{R}^{n \times n}$.
As introduced in Section~\ref{sec:intro}, the entry $c_{ij}$ represents the cost of moving a unit of mass from category $i$ to category $j$, and thereby encodes the geometric structure of the categories: a small $c_{ij}$ means that categories $i$ and $j$ are close.
We assume that $c_{ij} \geq 0$ and $c_{ii} = 0$ for all $i,j \in N$.
Note that we do not assume $c_{ij} = c_{ji}$, nor the triangle inequality, so $d_{\mathrm{OT}}$ is in general not a metric; for this reason, we use the term \emph{optimal transport discrepancy} rather than \emph{Wasserstein distance}.
When $C$ is a metric on $N$, the discrepancy $d_{\mathrm{OT}}$ reduces to the $1$-Wasserstein distance and OT-DRO becomes the Wasserstein DRO model studied in~\cite{MK_MM2018,BM_MOR2019,GK_MOR2023}.

Since the constraint $d_{\mathrm{OT}}(\vt{p}, \hat{\vt{p}}) \leq \varepsilon$ is defined implicitly through an optimal transport problem, we obtain an explicit formulation by lifting the problem: we introduce the transport plan $\vt{y} = (y_{ij}) \in \mathbb{R}^{n \times n}_{\geq 0}$ as a decision variable.
A variable $\vt{y}$ is a transport plan from $\hat{\vt{p}}$ to $\vt{p}$ if and only if $\sum_{j=1}^{n} y_{ij} = \hat{p}_i$ for all $i \in N$ and $\sum_{i=1}^{n} y_{ij} = p_j$ for all $j \in N$.
The optimal transport discrepancy is the minimum of $\sum_{i=1}^{n}\sum_{j=1}^{n} c_{ij} y_{ij}$ over all such transport plans; in particular, $d_{\mathrm{OT}}(\vt{p}, \hat{\vt{p}}) \leq \varepsilon$ if and only if there exists a transport plan $\vt{y}$ from $\hat{\vt{p}}$ to $\vt{p}$ with $\sum_{i=1}^{n}\sum_{j=1}^{n} c_{ij} y_{ij} \leq \varepsilon$.

This description yields the following properties of the ambiguity set.
\begin{lemma}
\label{lem:otdro-set}
Let $\varepsilon>0$, and assume $c_{ij} \ge 0$ and $c_{ii}=0$ for all $i,j \in N$.
Then $\mathcal{U}_{\mathrm{OT}}(\hat{\vt{p}}, \varepsilon)$ satisfies Assumption~\ref{asm:ambiguity}.
\end{lemma}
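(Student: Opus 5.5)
We verify the three requirements of Assumption~\ref{asm:ambiguity} in turn: nonemptiness, compactness and convexity, and nonempty intersection with $\operatorname{ri}(\Delta^{n})$. All three rest on the lifted description: $\vt{p} \in \mathcal{U}_{\mathrm{OT}}(\hat{\vt{p}}, \varepsilon)$ if and only if there exists $\vt{y} \in \mathbb{R}^{n\times n}_{\ge 0}$ with row sums $\hat{\vt{p}}$, column sums $\vt{p}$, and $\sum_{i,j} c_{ij}y_{ij} \le \varepsilon$.

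\emph{Nonemptiness.} Take the diagonal plan $y_{ii} = \hat{p}_i$ and $y_{ij}=0$ for $i \neq j$. It transports $\hat{\vt{p}}$ to itself at cost $\sum_i c_{ii}\hat{p}_i = 0 \le \varepsilon$, because $c_{ii}=0$. Hence $\hat{\vt{p}} \in \mathcal{U}_{\mathrm{OT}}(\hat{\vt{p}}, \varepsilon)$.

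\emph{Compactness and convexity.} Let $Y$ be the set of pairs $(\vt{p},\vt{y})$ satisfying the linear constraints above together with $\vt{p} \in \Delta^{n}$. Then $Y$ is a polyhedron. It is bounded, since $0 \le y_{ij} \le \hat{p}_i \le 1$. So $Y$ is a compact convex set. The ambiguity set is the image of $Y$ under the linear projection $(\vt{p},\vt{y}) \mapsto \vt{p}$. A continuous image of a compact set is compact, and a linear image of a convex set is convex. This lifting argument avoids having to prove separately that the infimum defining $d_{\mathrm{OT}}$ is attained. That attainment does hold, because the transport polytope is compact, so the lifted description is indeed exact, as stated in the text.

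\emph{Strictly positive element.} This is the step I expect to be the main one. I mimic Remark~\ref{rem:qdro-set}: set $\vt{p}^{(t)} = (1-t)\hat{\vt{p}} + t\,\vt{1}_n/n$ for $t\in(0,1]$, which is strictly positive. Choose any transport plan $\vt{y}^{u}$ from $\hat{\vt{p}}$ to $\vt{1}_n/n$, for instance the product plan $y^{u}_{ij} = \hat{p}_i/n$. Its cost $K = \sum_{i,j} c_{ij}y^{u}_{ij}$ is finite. Let $\vt{y}^{0}$ be the diagonal plan from the first step and form $\vt{y}^{(t)} = (1-t)\vt{y}^{0} + t\,\vt{y}^{u}$. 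By linearity, its row sums are $\hat{\vt{p}}$ and its column sums are $\vt{p}^{(t)}$. Its cost is $(1-t)\cdot 0 + tK = tK$. Choosing $t \in (0,1]$ with $tK \le \varepsilon$ (any $t$ works if $K=0$, otherwise $t \le \min\{1,\varepsilon/K\}$) gives $\vt{p}^{(t)} \in \mathcal{U}_{\mathrm{OT}}(\hat{\vt{p}}, \varepsilon)\cap\operatorname{ri}(\Delta^{n})$. This step relies on $\varepsilon>0$ together with $c_{ii}=0$: the zero-cost diagonal plan leaves room to mix in a small amount of the uniform distribution.
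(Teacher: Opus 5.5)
Your proof is correct and follows the paper's argument. Nonemptiness uses the same diagonal plan, and the strictly positive element comes from the same plan $(1-t)\hat{p}_i\delta_{ij} + t\hat{p}_i/n$, which transports $\hat{\vt{p}}$ to $(1-t)\hat{\vt{p}}+t\vt{1}_n/n$ at cost proportional to $t$. The only difference is packaging: you get compactness and convexity at once as the linear image of the bounded polyhedron of pairs $(\vt{p},\vt{y})$. The paper instead proves convexity of $d_{\mathrm{OT}}(\cdot,\hat{\vt{p}})$ by mixing optimal plans, and closedness by extracting a convergent subsequence of plans, which amounts to the same facts written out by hand.
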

The proof is given in Appendix~\ref{app:ambiguity}.

Substituting $p_j = \sum_{i=1}^{n} y_{ij}$ into the objective of~\eqref{eq:otdro-inner-p} and eliminating $\vt{p}$, we obtain the problem
\begin{subequations}
\label{eq:otdro-inner}
\begin{align}
  \max_{\vt{y}}~& \sum_{i=1}^{n}\sum_{j=1}^{n} (-\log x_j) y_{ij} \label{eq:otdro-inner-obj}\\
\st~& \sum_{j=1}^n y_{ij} = \hat{p}_i \text{ for $i \in N$}, \label{eq:otdro-inner-c1}\\
& \sum_{i=1}^{n}\sum_{j=1}^{n} c_{ij} y_{ij} \leq \varepsilon, \label{eq:otdro-inner-c2}\\
& y_{ij} \ge 0 \text{ for $i,j \in N$.}
\end{align}
\end{subequations}

The marginal constraints $\sum_{i=1}^{n} y_{ij} = p_j$ have been absorbed into the objective via $p_j = \sum_{i=1}^{n} y_{ij}$, so they no longer appear explicitly; the nonnegativity $\vt{p} \geq \vt{0}$ is automatically satisfied since $y_{ij} \geq 0$, and the simplex condition $\sum_{j=1}^{n} p_j = 1$ follows from the constraint~\eqref{eq:otdro-inner-c1} via $\sum_{j=1}^{n} p_j = \sum_{i=1}^{n}\sum_{j=1}^{n} y_{ij} = \sum_{i=1}^{n} \hat{p}_{i} = 1$.
When $x_{j}>0$ for all $j \in N$, every coefficient $-\log x_{j}$ is finite and~\eqref{eq:otdro-inner} is a linear programming (LP) problem; when $x_{k}=0$ for some $k \in N$, its objective takes values in $[0,+\infty]$ under the conventions of Section~\ref{sec:pre}.
Since the substitution $p_{j} = \sum_{i=1}^{n}y_{ij}$ does not change the objective value,
problems~\eqref{eq:otdro-inner-p} and~\eqref{eq:otdro-inner} clearly have the same optimal
value for every $\vt{x} \in \Delta^{n}$.

\subsection{Dual Reformulation}
\label{subsec:dual}
We first derive the dual of~\eqref{eq:otdro-inner} for $\vt{x} \in \Delta^{n}$ with $x_{j}>0$ for all $j \in N$, so that~\eqref{eq:otdro-inner} is an LP.
Introducing Lagrange multipliers $\mu_i \in \mathbb{R}$ for the equality constraints~\eqref{eq:otdro-inner-c1} and $\alpha \geq 0$ for the budget constraint~\eqref{eq:otdro-inner-c2}, we obtain the Lagrangian
\begin{align*}
L(\vt{y}, \vt{\mu}, \alpha)
&= \sum_{i=1}^{n}\sum_{j=1}^{n}(-\log x_j) y_{ij} + \sum_{i=1}^{n} \mu_i\left(\hat{p}_i - \sum_{j=1}^{n} y_{ij}\right) + \alpha\left(\varepsilon - \sum_{i=1}^{n}\sum_{j=1}^{n} c_{ij} y_{ij}\right)\\
&=\sum_{i=1}^{n}\sum_{j=1}^{n} (-\log x_j - \mu_i - c_{ij}\alpha) y_{ij} + \sum_{i=1}^{n} \hat{p}_i \mu_i + \varepsilon \alpha.
\end{align*}

The supremum over $\vt{y} \geq \vt{0}$ is finite if and only if $(-\log x_j) - \mu_i - c_{ij}\alpha \leq 0$ for all $i, j \in N$; otherwise, choosing $y_{ij} \to \infty$ for such a pair $(i,j)$ drives the Lagrangian to $+\infty$.
Under this condition, the supremum is attained at $\vt{y} = \vt{0}$.
Hence, the dual problem is
\begin{subequations}
\label{eq:otdro-inner-dual}
\begin{align}
  \min_{\alpha \geq 0,\, \vt{\mu}\in\bb{R}^{n}}~ & \sum_{i=1}^n \hat{p}_i \mu_i +\varepsilon \alpha \label{eq:otdro-inner-dual_obj}\\
\st \quad & -\log x_j - c_{ij} \alpha \leq \mu_i \text{ for $i, j \in N$}. \label{eq:otdro-inner-dual_con}
\end{align}
\end{subequations}

Note that problem~\eqref{eq:otdro-inner-dual} is also defined when $x_{k}=0$ for some
$k \in N$.
In this case, the constraint~\eqref{eq:otdro-inner-dual_con} for the pair $(i,k)$ becomes $+\infty \le \mu_{i}$.
No real number $\mu_{i}$ satisfies this constraint, so~\eqref{eq:otdro-inner-dual} is
infeasible.
We adopt the convention that the infimum over the empty set is $+\infty$, so that the optimal
value of~\eqref{eq:otdro-inner-dual} is $+\infty$ in this case.
The following lemma shows that the primal problem~\eqref{eq:otdro-inner} and the dual problem~\eqref{eq:otdro-inner-dual} have the same optimal value for every $\vt{x} \in \Delta^{n}$, including the points on the relative boundary of $\Delta^{n}$.

\begin{lemma}
\label{lem:otdro-inner-dual}
Let $\vt{x} \in \Delta^{n}$.
Then problems~\eqref{eq:otdro-inner} and~\eqref{eq:otdro-inner-dual} have the same optimal value.
Moreover, the following statements hold.
\begin{itemize}
  \item[(i)] If $x_{j}>0$ for all $j \in N$, then the optimal value is finite, and both problems have optimal solutions.
  \item[(ii)] If $x_{k}=0$ for some $k \in N$, then the optimal value is $+\infty$.
\end{itemize}
\end{lemma}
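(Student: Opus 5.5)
We treat the two cases separately. In case (i) the claim is plain LP duality; in case (ii) we compare the primal value directly with the $+\infty$ convention for the infeasible dual.

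\emph{Case (i): $x_j>0$ for all $j\in N$.} Here every coefficient $-\log x_j$ is finite, so~\eqref{eq:otdro-inner} is a genuine LP.

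First, the primal is feasible. The plan $y_{ii}=\hat p_i$ and $y_{ij}=0$ for $i\neq j$ satisfies~\eqref{eq:otdro-inner-c1}. Its cost is $\sum_{i=1}^{n} c_{ii}\hat p_i=0\le\varepsilon$ because $c_{ii}=0$.

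Second, the primal feasible region is bounded. Constraint~\eqref{eq:otdro-inner-c1} together with $\vt{y}\ge\vt{0}$ gives $0\le y_{ij}\le \hat p_i\le 1$. The region is therefore a nonempty polytope, and the linear objective attains a finite maximum on it.

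Third, the dual is feasible. Take $\alpha=0$ and $\mu_i=\max_{j\in N}(-\log x_j)$ for every $i\in N$; this satisfies~\eqref{eq:otdro-inner-dual_con}.

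The strong duality theorem of linear programming then shows that both problems have optimal solutions with equal finite optimal values. The one point to check is that~\eqref{eq:otdro-inner-dual} is exactly the LP dual of~\eqref{eq:otdro-inner}. This holds by the Lagrangian computation preceding the lemma: the $\mu_i$ are free because they belong to equality constraints, $\alpha\ge0$ belongs to the $\le$ budget constraint, and the dual constraints come from $y_{ij}\ge0$.

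\emph{Case (ii): $x_k=0$ for some $k\in N$.} As noted before the lemma, the dual~\eqref{eq:otdro-inner-dual} is infeasible, so its value is $+\infty$ by convention. It remains to show that the primal value is also $+\infty$.

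It suffices to construct a feasible $\vt{y}$ with $\sum_{i=1}^{n} y_{ik}>0$. Then the objective contains the term $(-\log x_k)\sum_{i=1}^{n} y_{ik}=+\infty\cdot(\text{positive})=+\infty$. All other terms are nonnegative, by the convention $0\cdot(+\infty)=0$ and by $-\log x_j\ge0$. Hence the objective equals $+\infty$, and so does the maximum.

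\emph{Construction in case (ii).} This is the step that needs care, and it splits into two subcases.

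If $\hat p_k>0$, the identity plan from case (i) already has $y_{kk}=\hat p_k>0$ and cost $0$.

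If $\hat p_k=0$, choose any $i_0$ with $\hat p_{i_0}>0$. Set $y_{i_0k}=t$, $y_{i_0i_0}=\hat p_{i_0}-t$, $y_{ii}=\hat p_i$ for $i\neq i_0$, and all other entries zero, with $0<t\le\hat p_{i_0}$. This plan satisfies~\eqref{eq:otdro-inner-c1}, and its cost is $c_{i_0k}\,t$. If $c_{i_0k}=0$, any such $t$ keeps the cost within $\varepsilon$. Otherwise take $t=\min\{\hat p_{i_0},\varepsilon/c_{i_0k}\}>0$, which keeps the cost at most $\varepsilon$. This is where $\varepsilon>0$ is essential. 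In either subcase we obtain a feasible plan with $y_{i_0k}>0$, and the primal value is $+\infty$.

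Combining the two cases, the primal and dual optimal values agree for every $\vt{x}\in\Delta^n$. They are finite and attained in case (i), and equal to $+\infty$ in case (ii).
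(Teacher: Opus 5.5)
Your proof is correct and follows the paper's argument: LP strong duality in case (i), using the identity coupling for feasibility and $0\le y_{ij}\le\hat p_i$ for boundedness, and in case (ii) infeasibility of~\eqref{eq:otdro-inner-dual} together with a feasible plan that sends positive mass to category $k$. The only difference is where that plan comes from: the paper takes it from Lemma~\ref{lem:otdro-set} (a strictly positive $\tilde{\vt{p}}$ in the ambiguity set), whereas you build it directly by moving a small mass $t$ from some $i_0$ with $\hat p_{i_0}>0$ to $k$; this is self-contained and equally valid, and your extra dual-feasibility check in (i) is redundant but harmless.
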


\begin{proof}
(i) Suppose that $x_{j}>0$ for all $j \in N$.
By the above derivation, \eqref{eq:otdro-inner-dual} is the LP dual of~\eqref{eq:otdro-inner}.
Problem~\eqref{eq:otdro-inner} is feasible, because the identity coupling ($y_{ii} = \hat{p}_{i}$ and $y_{ij} = 0$ for $i \ne j$) satisfies its constraints.
In addition, its feasible region is bounded, because $0 \le y_{ij} \le \hat{p}_{i}$.
Therefore, by the LP duality theorem, both \eqref{eq:otdro-inner} and~\eqref{eq:otdro-inner-dual} have optimal solutions, and their optimal values are finite and equal.

(ii) Suppose that $x_{k}=0$ for some $k \in N$.
In this case, \eqref{eq:otdro-inner} is not an LP, so we compute the two optimal values directly.
By Lemma~\ref{lem:otdro-set}, there exists $\tilde{\vt{p}} \in \mathcal{U}_{\mathrm{OT}}(\hat{\vt{p}},\varepsilon)\cap \operatorname{ri}(\Delta^{n})$; in particular, $\tilde{p}_{j}>0$ for all $j \in N$.
Since $\tilde{\vt{p}} \in \mathcal{U}_{\mathrm{OT}}(\hat{\vt{p}},\varepsilon)$, there exists
a transport plan $\tilde{\vt{y}}$ from $\hat{\vt{p}}$ to $\tilde{\vt{p}}$ such that
$\sum_{i,j}c_{ij}\tilde{y}_{ij} \le \varepsilon$.
This $\tilde{\vt{y}}$ satisfies the constraints of~\eqref{eq:otdro-inner}, so it is a
feasible solution of~\eqref{eq:otdro-inner}.
Since $\tilde{\vt{y}}$ is a transport plan to $\tilde{\vt{p}}$, we have
$\sum_{i}\tilde{y}_{ik} = \tilde{p}_{k}>0$, so $\tilde{y}_{ik}>0$ for some $i \in N$.
Since $-\log x_{k} = +\infty$ and all the other terms of the objective function
of~\eqref{eq:otdro-inner} are nonnegative, the objective value at $\tilde{\vt{y}}$ is
$+\infty$.
Therefore, the optimal value of~\eqref{eq:otdro-inner} is $+\infty$.
On the other hand, as shown above, \eqref{eq:otdro-inner-dual} is infeasible, and its
optimal value is $+\infty$.\qed
\end{proof}

Replacing the inner problem with its dual~\eqref{eq:otdro-inner-dual} and combining with the outer minimization yields a single minimization problem.
\begin{subequations}
\label{eq:otdro-reform}
\begin{align}
  \min_{(\vt{x}, \alpha, \vt{\mu})}~ & \sum_{i=1}^n \hat{p}_i \mu_i +\varepsilon \alpha \label{eq:otdro-reform-obj}\\
\st \quad & \sum_{j=1}^n x_j = 1, \label{eq:otdro-reform-con1}\\
& -\log x_j - c_{ij} \alpha \leq \mu_i \text{ for $i, j \in N$}, \label{eq:otdro-reform-con2}\\
& \alpha \geq 0, \quad x_j \geq 0 \text{ for $j \in N$} \label{eq:otdro-reform-con3}.
\end{align}
\end{subequations}

Before stating the relationship between the OT-DRO model~\eqref{eq:otdro} and its reformulation~\eqref{eq:otdro-reform}, we show the positivity of the estimator, which holds not only at optimality but at every feasible solution of~\eqref{eq:otdro-reform}.

\begin{proposition}
\label{prop:otdro-pos}
Every feasible solution $(\vt{x}, \alpha, \vt{\mu})$ of~\eqref{eq:otdro-reform} satisfies $x_j > 0$ for all $j \in N$.
In particular, so does every optimal solution.
\end{proposition}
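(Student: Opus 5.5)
The plan is to argue directly from the constraints of~\eqref{eq:otdro-reform}, treating $-\log x_j$ as extended-real-valued under the conventions of Section~\ref{sec:pre}. The key observation is that constraint~\eqref{eq:otdro-reform-con2} involves only the real numbers $\mu_i$, $c_{ij}$ and $\alpha$ on the side opposite to $-\log x_j$. Hence every $-\log x_j$ must be bounded above by a finite quantity, and no coordinate can vanish. This mirrors the infeasibility argument given for~\eqref{eq:otdro-inner-dual} just before Lemma~\ref{lem:otdro-inner-dual}.

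Concretely, let $(\vt{x},\alpha,\vt{\mu})$ be feasible and suppose, for contradiction, that $x_k = 0$ for some $k \in N$. Fix any $i \in N$; choosing $i=k$ is convenient, since $c_{kk}=0$ then removes the $\alpha$ term. Constraint~\eqref{eq:otdro-reform-con2} for the pair $(i,k)$ reads
\begin{align*}
-\log x_k - c_{ik}\alpha \le \mu_i .
\end{align*}
With $-\log 0 = +\infty$ and $c_{ik}\alpha \in [0,\infty)$ finite (because $c_{ik} \ge 0$ and $\alpha \ge 0$ is real), the left-hand side equals $+\infty$. The right-hand side $\mu_i$ is a real number, so this is a contradiction. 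Therefore $x_j > 0$ for every $j \in N$.

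The ``in particular'' clause is immediate, since every optimal solution is feasible. There is no real obstacle here. The only point needing care is to state explicitly that $x_k=0$ is admissible a priori under~\eqref{eq:otdro-reform-con3}, and that the constraint is interpreted through the extended-real convention rather than left undefined. Equivalently, one can note that feasibility gives $x_j \ge \exp(-\mu_j) > 0$, using $c_{jj}=0$ in the $(j,j)$ constraint. This quantitative form is the one I would actually write, because it avoids the contradiction altogether.
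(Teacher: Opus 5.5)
Your proposal is correct and takes essentially the same approach as the paper: the constraint $-\log x_j \le \mu_i + c_{ij}\alpha$ has a finite right-hand side, so $x_j = 0$ is excluded under the convention $-\log 0 = +\infty$. Your quantitative variant $x_j \ge \exp(-\mu_j)$, which uses $c_{jj}=0$, is a valid and slightly sharper way to write the same observation.
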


\begin{proof}
Let $(\vt{x}, \alpha, \vt{\mu})$ be feasible and fix $i, j \in N$.
Since $\alpha \ge 0$ and $c_{ij} \ge 0$, the constraint~\eqref{eq:otdro-reform-con2} yields $-\log x_{j} \le \mu_{i}+c_{ij}\alpha$.
The right-hand side is finite, so $-\log x_{j} < +\infty$, which means $x_{j}>0$ under the convention $-\log 0 = +\infty$.
\qed
\end{proof}

We are now in a position to state the relationship between the OT-DRO model~\eqref{eq:otdro} and its reformulation~\eqref{eq:otdro-reform}.

\begin{theorem}[OT-DRO reformulation]
\label{thm:otdro-equiv}
Suppose $\varepsilon>0$.
Then both the OT-DRO model~\eqref{eq:otdro} and the problem~\eqref{eq:otdro-reform} have optimal solutions, and the two problems share the same optimal value.
Moreover, a solution $\vt{x}^*$ is optimal to~\eqref{eq:otdro} if and only if there exist $\alpha^* \geq 0$ and $\vt{\mu}^* \in \mathbb{R}^n$ such that $(\vt{x}^*, \alpha^*, \vt{\mu}^*)$ is an optimal solution of~\eqref{eq:otdro-reform}.
\end{theorem}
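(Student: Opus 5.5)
We define the function
\[
\Phi(\vt{x}) := \max_{\vt{p}\in\mathcal{U}_{\mathrm{OT}}(\hat{\vt{p}},\varepsilon)}\sum_{j=1}^{n}p_j(-\log x_j), \qquad \vt{x}\in\Delta^{n},
\]
so that the OT-DRO model~\eqref{eq:otdro} is $\min_{\vt{x}\in\Delta^n}\Phi(\vt{x})$. By the discussion after~\eqref{eq:otdro-inner}, $\Phi(\vt{x})$ equals the optimal value of~\eqref{eq:otdro-inner}. By Lemma~\ref{lem:otdro-inner-dual}, it also equals the optimal value $D(\vt{x})$ of~\eqref{eq:otdro-inner-dual}, for every $\vt{x}\in\Delta^n$ including boundary points, where both values are $+\infty$.

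The plan has two parts. First, show that~\eqref{eq:otdro} attains its minimum. Then transfer optimality between the two formulations through the identity $\Phi=D$ and the attainment of the inner dual.

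\textbf{Existence for~\eqref{eq:otdro}.} We show that $\Phi$ is lower semicontinuous on $\Delta^n$ as a map into $[0,+\infty]$. For each fixed $\vt{p}\in\Delta^n$, the map $\vt{x}\mapsto\sum_j p_j(-\log x_j)$ is lower semicontinuous on $\Delta^n$ under the stated conventions. Indeed, each term $p_j(-\log x_j)$ is continuous in the extended sense, taking the value $+\infty$ at $x_j=0$ when $p_j>0$, and it is identically $0$ when $p_j=0$. A pointwise supremum of lower semicontinuous functions is lower semicontinuous, so $\Phi$ is too. Since $\Delta^n$ is compact, $\Phi$ attains its infimum. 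The minimum value is finite, since for instance $\Phi(\vt{1}_n/n)=\log n$. Any minimizer $\vt{x}^*$ therefore satisfies $\Phi(\vt{x}^*)<\infty$, and Lemma~\ref{lem:otdro-inner-dual}(ii) then forces $\vt{x}^*\in\operatorname{ri}(\Delta^n)$.

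\textbf{Equivalence.} Observe that $(\vt{x},\alpha,\vt{\mu})$ is feasible for~\eqref{eq:otdro-reform} exactly when $\vt{x}\in\Delta^n$ and $(\alpha,\vt{\mu})$ is feasible for~\eqref{eq:otdro-inner-dual} at $\vt{x}$. Hence
\[
\inf\eqref{eq:otdro-reform}=\inf_{\vt{x}\in\Delta^n}D(\vt{x})=\inf_{\vt{x}\in\Delta^n}\Phi(\vt{x})=\Phi(\vt{x}^*).
\]
Since $\vt{x}^*$ is strictly positive, Lemma~\ref{lem:otdro-inner-dual}(i) provides a dual optimal pair $(\alpha^*,\vt{\mu}^*)$ whose objective value is $D(\vt{x}^*)=\Phi(\vt{x}^*)$. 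So $(\vt{x}^*,\alpha^*,\vt{\mu}^*)$ attains the infimum of~\eqref{eq:otdro-reform}. This gives existence of optimal solutions for~\eqref{eq:otdro-reform}, equality of the two optimal values, and the ``only if'' direction.

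For the ``if'' direction, let $(\vt{x}^*,\alpha^*,\vt{\mu}^*)$ be optimal for~\eqref{eq:otdro-reform}. Then
\[
\Phi(\vt{x}^*)=D(\vt{x}^*)\le\sum_i\hat{p}_i\mu^*_i+\varepsilon\alpha^*=\min\eqref{eq:otdro-reform}=\min\eqref{eq:otdro},
\]
so $\vt{x}^*$ is optimal for~\eqref{eq:otdro}. Proposition~\ref{prop:otdro-pos} additionally confirms that $\vt{x}^*>0$, although the argument does not strictly need it.

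\textbf{Main obstacle.} The delicate step is existence for~\eqref{eq:otdro}. The objective is extended-valued and unbounded near the boundary of $\Delta^n$, so the lower semicontinuity argument must be stated carefully within the $0\cdot(-\log 0)=0$ convention. If this becomes awkward, an alternative is to restrict to the compact sublevel set $\{\vt{x}\in\Delta^n:\Phi(\vt{x})\le\log n\}$. Using a strictly positive $\tilde{\vt{p}}$ in the ambiguity set, which exists by Lemma~\ref{lem:otdro-set}, we have $\Phi(\vt{x})\ge\tilde{p}_j(-\log x_j)$ for every $j$. This bounds every $x_j$ away from $0$ on the sublevel set, on which $\Phi$ is a finite maximum of continuous functions and hence continuous.
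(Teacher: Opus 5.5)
Your proof is correct, and its skeleton matches the paper's. Both use the identity between the worst-case loss and the inner dual value from Lemma~\ref{lem:otdro-inner-dual}, and the iterated infimum to get equal optimal values. Both attain the inner dual at a strictly positive minimizer to build an optimal triple, and both use the same chain of inequalities for the ``if'' direction.

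The one genuine difference is how you obtain an optimal solution of~\eqref{eq:otdro}. The paper imports it from Theorem~\ref{thm:unified}, that is, via Sion's minimax theorem and the maximum entropy characterization. That theorem also gives it uniqueness and positivity of $\vt{x}^*$, and the paper then appeals to uniqueness to close the ``only if'' part. You argue directly instead:
\begin{itemize}
\item the worst-case loss is lower semicontinuous, as a supremum of the extended-valued continuous maps $\vt{x}\mapsto\mathcal{L}(\vt{x},\vt{p})$;
\item it attains its minimum on the compact simplex, and that minimum is finite because the loss equals $\log n$ at $\vt{1}_n/n$;
\item every minimizer is therefore strictly positive by Lemma~\ref{lem:otdro-inner-dual}(ii).
\end{itemize}

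This makes the reformulation theorem self-contained and independent of the minimax/entropy machinery. It is essentially the lower semicontinuity step already embedded in the proof of Theorem~\ref{thm:unified}, extracted on its own. Because you run the dual-attainment argument for an arbitrary minimizer, you never need uniqueness.

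The paper's route is shorter once Theorem~\ref{thm:unified} is available. It also identifies $\vt{x}^*$ as the maximum-entropy distribution, but the statement does not require that.

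Your primary lower semicontinuity argument is already rigorous under the stated conventions, so the fallback via the sublevel set is unnecessary.
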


The proof is given in Appendix~\ref{app:solvable}.
It combines Lemma~\ref{lem:otdro-inner-dual} with Theorem~\ref{thm:unified}, which supplies the optimal solution $\vt{x}^{*}$ of~\eqref{eq:otdro}; the accompanying multipliers $(\alpha^{*},\vt{\mu}^{*})$ are then optimal solutions of~\eqref{eq:otdro-inner-dual} at $\vt{x} = \vt{x}^{*}$.

The formulation~\eqref{eq:otdro-reform} is a convex optimization problem: the objective is linear, and the only nonlinear constraint $-\log x_j - c_{ij}\alpha \leq \mu_i$ is convex, since $-\log{x_j}$ is convex in $x_j$.
The logarithmic terms can be represented via exponential cone constraints, enabling efficient solution by off-the-shelf conic optimization solvers.
Moreover, the problem satisfies Slater's condition: the solution $(\vt{x},\alpha,\vt{\mu}) = (\vt{1}_n/n, 1, (\log{n}+\max_{i,j}\{c_{ij}\}+1)\vt{1}_{n})$ is feasible and satisfies every inequality constraint strictly.
Consequently, the KKT conditions are necessary and sufficient for optimality.

The role of the robustness radius $\varepsilon$ can be understood directly from~\eqref{eq:otdro-reform}.
The multiplier $\alpha$, associated with the transport budget, appears in the objective as $\varepsilon\alpha$ and in the constraints as $c_{ij}\alpha$.
When $\varepsilon$ is sufficiently large, the term $\varepsilon\alpha$ heavily penalizes positive values of $\alpha$, so the optimal solution takes $\alpha = 0$.
In this case, the cost matrix $c_{ij}$ drops out of the constraints, which reduce to $-\log x_j \leq \mu_i$ for all $i, j \in N$, independent of $i$.
Minimizing the objective $\sum_i \hat{p}_i \mu_i$ then amounts to maximizing $\min_j x_j$ over the simplex, whose solution is the uniform distribution $\vt{x} = \vt{1}_n/n$, since the minimum component is maximized when all components are equal.
Thus, as $\varepsilon$ grows, the estimator is driven toward the uniform distribution.

\subsection{Properties of OT-DRO}
\label{subsec:properties}
The constraints $-\log x_j - c_{ij}\alpha \leq \mu_i$ can be rewritten as $\mu_i \geq \max_{j \in N}\{-\log x_j - c_{ij}\alpha\}$.
At any optimal solution this holds with equality for every $i$ with $\hat{p}_{i}>0$, because increasing such a $\mu_{i}$ strictly increases the objective.
For the remaining indices the value of $\mu_{i}$ does not affect the objective, since $\hat{p}_{i}=0$, so we may assume the equality for all $i \in N$ without loss of generality.
Moreover,
by Proposition~\ref{prop:otdro-pos}, every feasible solution of~\eqref{eq:otdro-reform} satisfies $x_{j}>0$ for all $j \in N$, so all the logarithms below are finite,
and $\max_{j \in N}\{-\log x_j - c_{ij}\alpha\}$ is rewritten as
\begin{align}
  \max_{j \in N}\{-\log x_j - c_{ij}\alpha\}
  &=\max_{j \in N}\{-\log x_{i} + \log x_{i}-\log x_j - c_{ij}\alpha\}
  =-\log x_{i}+\max_{j \in N}\left\{ \log\left(\frac{x_{i}}{x_{j}}\right) - c_{ij}\alpha\right\}. \label{eq:penalty}
\end{align}

As noted above, the $\vt{x}$-component of every feasible solution of~\eqref{eq:otdro-reform} belongs to $\operatorname{ri}(\Delta^{n})$.
Hence, substituting~\eqref{eq:penalty} into~\eqref{eq:otdro-reform-obj}, we obtain the following equivalent formulation:
\begin{align*}
\min_{\vt{x} \in \operatorname{ri}(\Delta^n),\, \alpha \geq 0}~ \sum_{i=1}^n \hat{p}_{i}(-\log x_{i})+\sum_{i=1}^{n}\hat{p}_{i}\max_{j \in N}\left\{ \log\left(\frac{x_{i}}{x_{j}}\right) - c_{ij}\alpha\right\} +\varepsilon \alpha.
\end{align*}

The first term is the empirical cross-entropy loss, and the second term acts as a Lipschitz-type penalty on the log-probability vector.
To see this, note that the term $\max_{j \in N}\left\{\log(x_i/x_j) - c_{ij}\alpha\right\}$ evaluates the maximum log-ratio $\log(x_{i}/x_{j})$ across all categories $j \in N$, penalized in proportion to the cost $c_{ij}\alpha$.
The argument at $j=i$ equals zero (since $c_{ii}=0$), so the maximum is always nonnegative, and it becomes positive when some category $j$ with small $c_{ij}$ has $x_j < x_i$, i.e., when the estimated probability drops sharply between neighboring categories.
The penalty thus restricts the variation of estimated probabilities across categories in proportion to their pairwise distances: categories with small $c_{ij}$ are coupled through the penalty, encouraging similar estimated probabilities, while categories with large $c_{ij}$ are decoupled and can differ more freely.
The variable $\alpha$ controls the radius of this coupling: small $\alpha$ allows widespread coupling between distant categories, while large $\alpha$ localizes the coupling to nearby categories, with the penalty $\varepsilon\alpha$ on $\alpha$ balancing these effects.
This stands in clear contrast to the $q^*$-norm penalty of $q$-DRO in~\eqref{eq:qdro-simplified}, which treats all categories symmetrically.

We now characterize the condition under which OT-DRO returns the uniform distribution.
The proof of Theorem~\ref{thm:otdro-threshold} is also given in Appendix~\ref{app:threshold_otdro}.

\begin{theorem}[OT-DRO threshold]
\label{thm:otdro-threshold}
The uniform distribution $\vt{x} = \vt{1}_n/n$ is an optimal solution to OT-DRO if and only if $d_{\mathrm{OT}}(\vt{1}_n/n, \hat{\vt{p}})\le \varepsilon$.
\end{theorem}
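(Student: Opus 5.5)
We prove both directions directly, working with the inner worst-case value $\phi(\vt{x}) = \max_{\vt{p} \in \mathcal{U}_{\mathrm{OT}}(\hat{\vt{p}},\varepsilon)} \mathcal{L}(\vt{x},\vt{p})$. The key observation is that at the uniform distribution the loss does not depend on $\vt{p}$: $\mathcal{L}(\vt{1}_n/n,\vt{p}) = \log n$ for every $\vt{p}\in\Delta^n$. Hence $\phi(\vt{1}_n/n)=\log n$, and the uniform distribution is optimal if and only if $\phi(\vt{x}) \ge \log n$ for every $\vt{x}\in\Delta^n$.

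\emph{Sufficiency.} Suppose $d_{\mathrm{OT}}(\vt{1}_n/n,\hat{\vt{p}})\le\varepsilon$, so that $\vt{1}_n/n\in\mathcal{U}_{\mathrm{OT}}(\hat{\vt{p}},\varepsilon)$. Then for any $\vt{x}\in\Delta^n$,
\begin{align*}
\phi(\vt{x}) \ge \mathcal{L}(\vt{x},\vt{1}_n/n) = \frac{1}{n}\sum_{j=1}^{n}(-\log x_j) \ge -\log\Bigl(\frac{1}{n}\sum_{j=1}^{n}x_j\Bigr) = \log n,
\end{align*}
where the second inequality is Jensen's inequality applied to the convex function $-\log$. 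If some $x_j=0$, the middle term is $+\infty$ and the bound holds trivially. Therefore $\vt{1}_n/n$ attains the minimum.

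\emph{Necessity.} This is the main step. Suppose $d_{\mathrm{OT}}(\vt{1}_n/n,\hat{\vt{p}})>\varepsilon$. We exhibit $\vt{x}$ with $\phi(\vt{x})<\log n$. Since $\mathcal{U}:=\mathcal{U}_{\mathrm{OT}}(\hat{\vt{p}},\varepsilon)$ is compact and convex (Lemma~\ref{lem:otdro-set}) and does not contain $\vt{u}=\vt{1}_n/n$, we separate them strictly. This gives $\vt{g}\in\mathbb{R}^n$ and $\delta>0$ with $\vt{g}^\top\vt{p} \le \vt{g}^\top\vt{u}-\delta$ for all $\vt{p}\in\mathcal{U}$. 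Because every point involved lies in the simplex, we may shift $\vt{g}$ by a multiple of $\vt{1}_n$ without changing the inequality.

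Now perturb the uniform distribution in the direction of $\vt{g}$: set $x_j(t) \propto \exp(t g_j)$ for small $t>0$. This gives
\begin{align*}
-\log x_j(t) = \log n - t(g_j-\bar g) + O(t^2), \qquad \bar g = \tfrac{1}{n}\sum_{j=1}^{n} g_j .
\end{align*}
Consequently,
\begin{align*}
\mathcal{L}(\vt{x}(t),\vt{p}) = \log n - t\,(\vt{g}^\top\vt{p}-\bar g) + O(t^2),
\end{align*}
with the $O(t^2)$ term uniform over $\vt{p}\in\Delta^n$.

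The separation was chosen with the wrong sign for this expansion, so we flip it: take $\vt{g}$ with $\vt{g}^\top\vt{p}\ge \vt{g}^\top\vt{u}+\delta = \bar g+\delta$ on $\mathcal{U}$, which is always possible by replacing $\vt{g}$ with $-\vt{g}$. Then $\mathcal{L}(\vt{x}(t),\vt{p})\le \log n - t\delta + O(t^2)$ uniformly on $\mathcal{U}$. Hence $\phi(\vt{x}(t))<\log n$ for small $t>0$, and $\vt{u}$ is not optimal.

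The delicate points are choosing the sign of the separating vector correctly and making the remainder in the expansion uniform in $\vt{p}$. Uniformity is easy here because $\vt{x}(t)$ is independent of $\vt{p}$, $\mathcal{L}$ is linear in $\vt{p}$, and each coefficient $-\log x_j(t)$ is a smooth function of $t$.

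As an alternative route, one could appeal to the maximum-entropy equivalence of Section~\ref{sec:unified}. The argument above avoids this and relies only on the compactness and convexity given by Lemma~\ref{lem:otdro-set}.
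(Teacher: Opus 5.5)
Your proof is correct, and it takes a genuinely different route from the paper's appendix proof.

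The paper works with the convex reformulation~\eqref{eq:otdro-reform}. For necessity it invokes Theorem~\ref{thm:otdro-equiv} and Slater's condition to obtain KKT multipliers $\pi_{ij}$. These form a transport plan from $\hat{\vt{p}}$ to $\vt{1}_n/n$ of cost $\varepsilon-\omega\le\varepsilon$. For sufficiency it aggregates the constraints $-\log x_j\le\mu_i+c_{ij}\alpha$ with a transport plan from $\hat{\vt{p}}$ to $\vt{1}_n/n$ of cost at most $\varepsilon$ (a weak-duality bound), and finishes with Jensen.

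You stay in the primal min--max problem throughout. Your sufficiency step is the same Jensen bound, reached in one line by evaluating the inner maximum at $\vt{p}=\vt{1}_n/n$. Your necessity step replaces the KKT multipliers by strict separation of $\vt{1}_n/n$ from the closed convex set $\mathcal{U}_{\mathrm{OT}}(\hat{\vt{p}},\varepsilon)$ plus a softmax perturbation. This is in effect a directional-derivative argument that exploits the fact that every $\vt{p}$ is active at the uniform point. The bookkeeping is simpler than you make it: $\mathcal{L}(\vt{x}(t),\vt{p})=\log\sum_{k}e^{tg_k}-t\,\vt{g}^\top\vt{p}$ exactly, so the remainder does not depend on $\vt{p}$ at all. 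You could also state the separation with the needed sign from the outset rather than flipping it midway.

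What your route buys is economy and generality. It uses only that the ambiguity set is nonempty, closed and convex. So it needs neither the LP duality of Lemma~\ref{lem:otdro-inner-dual}, nor Theorem~\ref{thm:otdro-equiv}, nor Slater/KKT, nor even the strictly positive point of Assumption~\ref{asm:ambiguity}. Verbatim it also proves Theorem~\ref{thm:qdro-threshold}. It also gives a self-contained proof of the ``Threshold conditions'' consequence in Section~\ref{sec:unified} without Sion's theorem.

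What the paper's KKT route buys is information tied to the computational reformulation. The same KKT computation shows that, for every $\varepsilon$, the multipliers $\pi_{ij}$ form a transport plan from $\hat{\vt{p}}$ to the optimal estimator $\vt{x}^*$ with cost at most $\varepsilon$. Hence $\vt{x}^*\in\mathcal{U}_{\mathrm{OT}}(\hat{\vt{p}},\varepsilon)$ in general, which is the reformulation-side counterpart of the identity $\vt{x}^*=\vt{p}^*$ in Theorem~\ref{thm:unified}.
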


\begin{remark}\label{rem:parallel}
The threshold $d_{\mathrm{OT}}(\vt{1}_n/n, \hat{\vt{p}})$ for OT-DRO and the threshold $\|\vt{1}_n/n - \hat{\vt{p}}\|_q$ for $q$-DRO (Theorem~\ref{thm:qdro-threshold}) share a common structure: in each case, the threshold is the distance from the empirical distribution to the uniform distribution, measured by the discrepancy that defines the ambiguity set. This parallel, obtained through separate analyses of two formulations with contrasting symmetry properties, suggests a common underlying principle, which we identify in the next section.
\end{remark}

\section{A Unifying Perspective via Maximum Entropy}
\label{sec:unified}
The two formulations studied above originate from contrasting motivations. The $q$-norm ambiguity set treats categories symmetrically and the resulting estimator satisfies Symmetry; the optimal transport ambiguity set breaks this symmetry through the cost matrix to enable structure-respecting smoothing. On the surface, these are opposing design choices. Yet in both cases the threshold is the distance from $\hat{\vt{p}}$ to $\vt{1}_n/n$ measured by the corresponding discrepancy.

We now show that this parallel is a direct consequence of a reformulation of the DRO problem~\eqref{eq:DRO} that applies to any ambiguity set satisfying Assumption~\ref{asm:ambiguity}, and which provides a unified explanation of these structural properties.

\subsection{Sion's Minimax Theorem and the Reformulation}
\begin{theorem}
\label{thm:unified}
Let $\mathcal{U} = \mathcal{U}(\hat{\vt{p}}, \varepsilon)$ satisfy Assumption~\ref{asm:ambiguity}.
Then the DRO problem~\eqref{eq:DRO} has a unique optimal solution $\vt{x}^*$, and $\vt{x}^* = \vt{p}^*$, where
\begin{align*}
\vt{p}^* = \underset{\vt{p} \in \mathcal{U}}{\mathop{{\rm argmax}}}\, H(\vt{p})
\end{align*}
is the unique maximizer of the Shannon entropy $H$ over $\mathcal{U}$.
Moreover, the optimal value of~\eqref{eq:DRO} equals $H(\vt{p}^*)$.
In particular, $x^*_{j} >0$ for all $j \in N$.
\end{theorem}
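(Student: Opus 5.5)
The plan is to prove the statement by a direct saddle-point argument built on the decomposition
\begin{align*}
\mathcal{L}(\vt{x},\vt{p}) = H(\vt{p}) + \mathrm{KL}(\vt{p}\,\|\,\vt{x}) \qquad (\vt{x},\vt{p}\in\Delta^{n}),
\end{align*}
which is valid under the conventions of Section~\ref{sec:pre}, with both sides equal to $+\infty$ when $\operatorname{supp}\vt{p}\not\subseteq\operatorname{supp}\vt{x}$. This amounts to verifying by hand the conclusion that Sion's minimax theorem would give. Applying Sion directly is awkward, because $\mathcal{L}$ takes the value $+\infty$ on the boundary of $\Delta^{n}$. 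Instead I establish the two minimax inequalities separately, with $\vt{p}^*$ as the explicit saddle point.

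\textbf{Step 1: existence, uniqueness and positivity of $\vt{p}^*$.} $H$ is continuous and strictly concave on the compact convex set $\mathcal{U}$, so it has a unique maximizer $\vt{p}^*$ on $\mathcal{U}$. For positivity, use Assumption~\ref{asm:ambiguity} to pick $\tilde{\vt{p}}\in\mathcal{U}\cap\operatorname{ri}(\Delta^{n})$ and set $\vt{p}^{(t)}=(1-t)\vt{p}^*+t\tilde{\vt{p}}\in\mathcal{U}$. Suppose $p^*_k=0$ for some $k$. Then the one-sided derivative of $t\mapsto H(\vt{p}^{(t)})$ at $t=0^+$ is $+\infty$, because the $k$-th term behaves like $-t\tilde p_k\log(t\tilde p_k)$ and all other terms have finite derivatives. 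This contradicts the maximality of $\vt{p}^*$, so $\vt{p}^*\in\operatorname{ri}(\Delta^{n})$.

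\textbf{Step 2: lower bound.} For every $\vt{x}\in\Delta^{n}$ we have $\max_{\vt{p}\in\mathcal{U}}\mathcal{L}(\vt{x},\vt{p})\ge \mathcal{L}(\vt{x},\vt{p}^*) = H(\vt{p}^*)+\mathrm{KL}(\vt{p}^*\|\vt{x})\ge H(\vt{p}^*)$, by Gibbs' inequality.

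\textbf{Step 3: upper bound at $\vt{x}=\vt{p}^*$ (the main step).} I need $\mathcal{L}(\vt{p}^*,\vt{p})\le H(\vt{p}^*)$ for all $\vt{p}\in\mathcal{U}$. Since $\vt{p}^*>0$, $H$ is differentiable at $\vt{p}^*$ with $\partial_j H(\vt{p}^*) = -\log p^*_j - 1$. First-order optimality of the concave function $H$ over the convex set $\mathcal{U}$ then gives
\begin{align*}
\sum_{j=1}^{n}(p_j-p^*_j)(-\log p^*_j-1)\le 0 \qquad (\vt{p}\in\mathcal{U}).
\end{align*}
The constant terms cancel because $\sum_j p_j=\sum_j p^*_j=1$, so this inequality is exactly $\mathcal{L}(\vt{p}^*,\vt{p})-H(\vt{p}^*)\le 0$. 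Here the positivity from Step~1 is essential: it makes every $\log p^*_j$ finite, so the directional derivative is legitimate.

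\textbf{Step 4: conclusion.} Combining Steps~2 and~3, $\vt{p}^*$ attains the minimum of $\max_{\vt{p}\in\mathcal{U}}\mathcal{L}(\cdot,\vt{p})$, and the optimal value of~\eqref{eq:DRO} is $H(\vt{p}^*)$. For uniqueness, let $\vt{x}$ be any optimal solution. By Step~2, $H(\vt{p}^*)=\max_{\vt{p}\in\mathcal{U}}\mathcal{L}(\vt{x},\vt{p})\ge H(\vt{p}^*)+\mathrm{KL}(\vt{p}^*\|\vt{x})$. Hence $\mathrm{KL}(\vt{p}^*\|\vt{x})=0$, which forces $\vt{x}=\vt{p}^*$. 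Positivity of $\vt{x}^*$ was already shown in Step~1.

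In the paper's framing, Steps~2 and~3 together are the statement that $(\vt{p}^*,\vt{p}^*)$ is a saddle point, which is what Sion's theorem would yield on a truncated compact set $\{\vt{x}\in\Delta^n : x_j\ge\delta\}$.
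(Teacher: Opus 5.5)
Your proof is correct, and it takes a genuinely different route from the paper.

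The paper first proves the minimax equality $\inf_{\vt{x}\in\Delta^n}\sup_{\vt{p}\in\mathcal{U}}\mathcal{L}(\vt{x},\vt{p})=\sup_{\vt{p}\in\mathcal{U}}\inf_{\vt{x}\in\Delta^n}\mathcal{L}(\vt{x},\vt{p})$. It does so by applying Sion's theorem on the truncated simplices $\Delta^n_\delta=\{\vt{x}\in\Delta^n \mid \vt{x}\ge\delta\vt{1}_n\}$ and showing that both values are continuous as $\delta\to+0$. It then identifies the right-hand side with $\max_{\mathcal{U}}H$ through $\min_{\vt{x}}\mathcal{L}(\vt{x},\vt{p})=H(\vt{p})$. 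Attainment of the outer minimum comes from lower semicontinuity of $g(\vt{x})=\sup_{\vt{p}\in\mathcal{U}}\mathcal{L}(\vt{x},\vt{p})$ on the compact simplex. The paper pins down $\vt{x}^*=\vt{p}^*$ by Gibbs' inequality exactly as in your Step~4, and gets positivity last, from $g(\vt{x}^*)<+\infty$ and the strictly positive point of $\mathcal{U}$.

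You instead exhibit the saddle point $(\vt{p}^*,\vt{p}^*)$ directly. Positivity of $\vt{p}^*$ comes first, from the infinite right-derivative of $H$ at the relative boundary. Strictly speaking, every coordinate with $p^*_j=0$ contributes $+\infty$, not only $k$, but all with the same sign, so your conclusion stands. Positivity makes the variational inequality $\nabla H(\vt{p}^*)^{\top}(\vt{p}-\vt{p}^*)\le 0$ available, and that inequality is literally $\mathcal{L}(\vt{p}^*,\vt{p})\le H(\vt{p}^*)$.

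What your route buys is that it is more elementary and self-contained. It needs no minimax theorem, no truncation-and-limit argument, and no semicontinuity argument, since the minimizer is produced explicitly. It also yields Remark~\ref{rem:saddle} as part of the proof rather than as a corollary. The paper's route, in turn, separates a loss-independent step (the minimax equality) from the loss-specific identity $\inf_{\vt{x}}\mathcal{L}(\vt{x},\vt{p})=H(\vt{p})$. That is the structure that extends to the Gr\"unwald--Dawid setting. Your Steps~1 and~3 instead rely specifically on the interior differentiability and infinite boundary slope of Shannon entropy.

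The only detail you leave implicit is that the inner supremum in~\eqref{eq:DRO} is attained, so writing $\max$ is legitimate. When $\vt{x}\in\operatorname{ri}(\Delta^n)$ it is a continuous linear function of $\vt{p}$ on the compact set $\mathcal{U}$. When some $x_k=0$ it equals $+\infty=\mathcal{L}(\vt{x},\tilde{\vt{p}})$.
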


\begin{proof}
Throughout the proof, let $\Delta_{\delta}^{n} \coloneqq \{ \vt{x} \in \mathbb{R}^{n} \mid \vt{1}_n^{\top} \vt{x} = 1, \vt{x} \geq \delta \vt{1}_n \}$, $\varphi(\delta) \coloneqq \inf_{\vt{x} \in \Delta_{\delta}^n} \sup_{\vt{p} \in \mathcal{U}} \mathcal{L}(\vt{x},\vt{p})$, and $\psi(\delta) \coloneqq \sup_{\vt{p} \in \mathcal{U}} \inf_{\vt{x} \in \Delta_{\delta}^n} \mathcal{L}(\vt{x},\vt{p})$ for all $\delta \in [0, \textstyle\frac{1}{n}]$.

We first show that
\begin{align} 
\lim_{\delta \to +0} \varphi(\delta) = \varphi(0), \qquad \lim_{\delta \to +0} \psi(\delta) = \psi(0).
\label{eq:minmax_lim}
\end{align}
Let $\vt{x} \in \Delta^n$ and $\delta \in (0,\frac{1}{n})$.
We define $g(\vt{x}) \coloneqq \sup_{\vt{p} \in \mathcal{U}} \mathcal{L}(\vt{x},\vt{p})$ and $\vt{z} \coloneqq (1 - \delta n) \vt{x} + \delta \vt{1}_n$.
From $-\log z_j \leq -\log((1-\delta n)x_{j}) = -\log x_j - \log(1-\delta n)$ for all $j \in N$,
we get $\mathcal{L}(\vt{z}, \vt{p}) \leq \sum_{j=1}^n p_{j} ( - \log x_{j} - \log \left( 1 - \delta n \right) ) = \mathcal{L}(\vt{x},\vt{p}) - \log ( 1 - \delta n )$ for any $\vt{p} \in \mathcal{U}$, i.e., $g(\vt{z}) = \sup_{\vt{p} \in \mathcal{U}} \mathcal{L}(\vt{z}, \vt{p}) \leq \sup_{\vt{p} \in \mathcal{U}} \mathcal{L}(\vt{x},\vt{p}) - \log ( 1 - \delta n )$.
Because $\varphi(\delta) = \inf_{\vt{x} \in \Delta_{\delta}^n} g(\vt{x}) \leq g(\vt{z})$ from $\vt{z} \in \Delta_{\delta}^n$, we have $\varphi(\delta) \leq g(\vt{z}) \leq \sup_{\vt{p} \in \mathcal{U}} \mathcal{L}(\vt{x},\vt{p}) - \log ( 1 - \delta n )$.
Then, taking the infimum of $\vt{x} \in \Delta^n$ implies $\varphi(\delta) \leq \varphi(0) - \log (1 - \delta n)$.
Since $\varphi(0) \leq \varphi(\delta)$ by $\Delta_{\delta}^n \subset \Delta^n$, we obtain $\varphi(0) \leq \varphi(\delta) \leq \varphi(0) - \log (1 - \delta n)$.
It then follows from $\delta \to +0$ that the first equality of~\eqref{eq:minmax_lim} holds.

Next, we show the second equality of~\eqref{eq:minmax_lim}.
It is well known that, for every $(\vt{x},\vt{p}) \in \Delta^n \times {\cal U}$,
\begin{align}
  \label{ineq:KL}
  \mathcal{L}(\vt{x},\vt{p})-H(\vt{p})=\sum_{j=1}^{n}p_{j}\log{\left(\frac{p_{j}}{x_{j}} \right)}=D_{\mathrm{KL}}(\vt{p}\,\|\, \vt{x})\ge0,
\end{align}
with equality if and only if $\vt{x}=\vt{p}$ (Gibbs' inequality; see~\cite{CT_EIT2006}).
Thus, $\inf_{\vt{x} \in \Delta^n} \mathcal{L}(\vt{x}, \vt{p}) \geq H(\vt{p})$ for $\vt{p} \in \mathcal{U}$.
Since $H(\vt{p}) = \mathcal{L}(\vt{p},\vt{p}) \geq \inf_{\vt{x} \in \Delta^n} \mathcal{L}(\vt{x}, \vt{p})$ for $\vt{p} \in \mathcal{U} \subset \Delta^n$, we get
\begin{align} \label{eq:inf_Hp}
\inf_{\vt{x} \in \Delta^n} \mathcal{L}(\vt{x},\vt{p}) = \min_{\vt{x} \in \Delta^n} \mathcal{L}(\vt{x},\vt{p}) = H(\vt{p}) \quad \forall \vt{p} \in \mathcal{U}.
\end{align}
Let $\delta \in (0, \frac{1}{n})$ and $\vt{p} \in \mathcal{U}$ be arbitrary, and let $\vt{x}_{\delta}(\vt{p}) \coloneqq (1 - \delta n) \vt{p} + \delta \vt{1}_n$.
Note that $\vt{x}_{\delta}(\vt{p}) \geq \delta \vt{1}_n$ and $\vt{1}_n^{\top} \vt{x}_{\delta}(\vt{p}) = (1 - \delta n) \vt{1}_n^{\top} \vt{p} + \delta \vt{1}_n^{\top} \vt{1}_n = 1$, namely, $\vt{x}_{\delta}(\vt{p}) \in \Delta_{\delta}^n$.
Hence, we obtain $\inf_{\vt{x} \in \Delta_{\delta}^n} \mathcal{L}(\vt{x},\vt{p}) \leq \mathcal{L}(\vt{x}_{\delta}(\vt{p}), \vt{p}) = H(\vt{p}) - \sum_{p_j > 0} p_j \log ( 1 -\delta n + \frac{\delta}{p_j} )$.
Moreover, using $\Delta_{\delta}^n \subset \Delta^n$, $-\log(1 - \delta n + \frac{\delta}{p_j}) \leq -\log(1 - \delta n)$, and $\vt{1}_n^{\top} \vt{p} = 1$ yields $\inf_{\vt{x} \in \Delta^{n}} \mathcal{L}(\vt{x},\vt{p}) \leq \inf_{\vt{x} \in \Delta_{\delta}^n} \mathcal{L}(\vt{x},\vt{p}) \leq H(\vt{p}) - \log (1 - \delta n)$.
Since $\vt{p} \in \mathcal{U}$ is arbitrary, taking the supremum of $\vt{p} \in \mathcal{U}$ in the above leads to $\psi(0) \leq \psi(\delta) \leq \sup_{\vt{p} \in \mathcal{U}} H(\vt{p}) - \log (1 - \delta n)$.
This result and~\eqref{eq:inf_Hp} imply $\psi(0) \leq \psi(\delta) \leq \psi(0) - \log (1 - \delta n)$.
Then, taking $\delta \to +0$ yields the second equality of~\eqref{eq:minmax_lim}.

We now prove the assertions of the theorem.
By Sion's minimax theorem~\cite{S_PJM1958}, we have $\inf_{\vt{x} \in \Delta_{\delta}^n} \sup_{\vt{p} \in \mathcal{U}} \mathcal{L}(\vt{x},\vt{p}) = \sup_{\vt{p} \in \mathcal{U}} \inf_{\vt{x} \in \Delta_{\delta}^n} \mathcal{L}(\vt{x},\vt{p})$ for all $\delta \in (0,\textstyle\frac{1}{n})$.
Taking $\delta \to +0$ in both sides of this equality and using~\eqref{eq:minmax_lim} yield
\begin{align} \label{eq:infsup}
\inf_{\vt{x} \in \Delta^n} \sup_{\vt{p} \in \mathcal{U}} \mathcal{L}(\vt{x},\vt{p}) = \sup_{\vt{p} \in \mathcal{U}} \inf_{\vt{x} \in \Delta^n} \mathcal{L}(\vt{x},\vt{p}).
\end{align}
Now, the function $H$ is continuous and strictly concave on $\mathcal{U}$, and the set $\mathcal{U}$ is nonempty, compact, and convex.
Thus, there exists a unique optimizer of $\sup_{\vt{p} \in \mathcal{U}} H(\vt{p})$.
It then follows from~\eqref{eq:inf_Hp} and~\eqref{eq:infsup} that
\begin{align} \label{eq:infsup_H}
\inf_{\vt{x} \in \Delta^n} \sup_{\vt{p} \in \mathcal{U}} \mathcal{L}(\vt{x},\vt{p}) = \sup_{\vt{p} \in \mathcal{U}} H(\vt{p}) = H(\vt{p}^{\ast}) < + \infty, \qquad \mathop{{\rm argmax}}_{\vt{p} \in \mathcal{U}} H(\vt{p}) = \{ \vt{p}^{\ast} \}.
\end{align}
Meanwhile, the function $g$ is lower semicontinuous (lsc) on $\Delta^n$.
In fact, it is shown as follows: For any $\vt{p} \in \mathcal{U}$ and any sequence $\{ \vt{w}_k \} \subset \Delta^n$ converging to $\bar{\vt{w}} \in \Delta^n$, the definition of $g$ means that $g(\vt{w}_k) \geq \mathcal{L}(\vt{w}_k, \vt{p})$ for all $k \in \mathbb{N}$. Since $\mathcal{L}(\, \cdot \,, \vt{p})$ is lsc on $\Delta^n$, we get $\liminf_{k \to \infty} g(\vt{w}_k) \geq \liminf_{k \to \infty} \mathcal{L}(\vt{w}_k, \vt{p}) \geq \mathcal{L}(\bar{\vt{w}}, \vt{p})$.
Thus, taking the supremum of $\vt{p} \in \mathcal{U}$ yields $\liminf_{k \to \infty} g(\vt{w}_k) \geq \sup_{\vt{p} \in \mathcal{U}} \mathcal{L}(\bar{\vt{w}}, \vt{p}) = g(\bar{\vt{w}})$.
Moreover, $g$ is proper convex on $\Delta^n$, since $g\ge 0$ and $g(\vt{1}_n/n)=\log n$.
These facts and the compactness of $\Delta^n$ guarantee that $\inf_{\vt{x} \in \Delta^n} g(\vt{x})$ has an optimizer $\bar{\vt{x}} \in \Delta^n$, i.e.,
\begin{align} \label{eq:infsup_minsup}
\inf_{\vt{x} \in \Delta^n} \sup_{\vt{p} \in \mathcal{U}} \mathcal{L}(\vt{x},\vt{p}) = \inf_{\vt{x} \in \Delta^n} g(\vt{x}) = \min_{\vt{x} \in \Delta^n} g(\vt{x}) = \min_{\vt{x} \in \Delta^n} \sup_{\vt{p} \in \mathcal{U}} \mathcal{L}(\vt{x},\vt{p}).
\end{align}
Note that $\sup_{\vt{p} \in \mathcal{U}} \mathcal{L}(\vt{x}, \vt{p})$ is solvable for $\vt{x} \in \operatorname{ri}(\Delta^{n})$, namely, $g(\vt{x}) = \sup_{\vt{p} \in \mathcal{U}} \mathcal{L}(\vt{x}, \vt{p}) = \max_{\vt{p} \in \mathcal{U}} \mathcal{L}(\vt{x}, \vt{p})$ for $\vt{x} \in \operatorname{ri}(\Delta^{n})$.
For $\vt{x} \in \Delta^n \setminus \operatorname{ri}(\Delta^{n})$, the same result indeed holds because
\begin{align} \label{cond:x}
\vt{x} \in \Delta^n \setminus \operatorname{ri}(\Delta^{n}) \quad \Longrightarrow \quad \exists \widehat{\vt{q}} \in \mathcal{U}, ~~ g(\vt{x}) = \mathcal{L}(\vt{x}, \widehat{\vt{q}}) = + \infty.
\end{align}
Now, we verify~\eqref{cond:x}.
Let $k \in N$ satisfy $x_{k}=0$.
By Assumption~\ref{asm:ambiguity}, there exists $\widehat{\vt{q}} \in \mathcal{U}$ such that $\widehat{\vt{q}} > \vt{0}$ and $\mathcal{L}(\vt{x}, \widehat{\vt{q}}) \geq \widehat{q}_{k} (-\log x_{k}) = \widehat{q}_{k} (-\log 0) = + \infty$.
Then, noting $+\infty \geq g(\vt{x}) = \sup_{\vt{p} \in \mathcal{U}} \mathcal{L}(\vt{x}, \vt{p}) \geq \mathcal{L}(\vt{x}, \widehat{\vt{q}})$ leads to~\eqref{cond:x}.
As a result, we get $\sup_{\vt{p} \in \mathcal{U}} \mathcal{L}(\vt{x}, \vt{p}) = \max_{\vt{p} \in \mathcal{U}} \mathcal{L}(\vt{x}, \vt{p})$ for $\vt{x} \in \Delta^n$.
Then, combining~\eqref{eq:infsup_H} and~\eqref{eq:infsup_minsup} yields $\min_{\vt{x} \in \Delta^n} \max_{\vt{p} \in \mathcal{U}} \mathcal{L}(\vt{x},\vt{p}) = \min_{\vt{x} \in \Delta^n} g(\vt{x}) = H(\vt{p}^{\ast}) < + \infty$.
Let $\vt{x}^{\ast}$ be any optimizer of $\min_{\vt{x} \in \Delta^n} \max_{\vt{p} \in \mathcal{U}} \mathcal{L}(\vt{x},\vt{p})$.
The definition of $g$ ensures $H(\vt{p}^{\ast}) = \min_{\vt{x} \in \Delta^n} g(\vt{x}) = g(\vt{x}^{\ast}) \geq \mathcal{L}(\vt{x}^{\ast}, \vt{p}^{\ast})$.
This fact and~\eqref{ineq:KL} imply $D_{\mathrm{KL}}(\vt{p}^{\ast}\,\|\,\vt{x}^{\ast}) = 0$, i.e., $\vt{x}^{\ast} = \vt{p}^{\ast}$.
It then follows from the uniqueness of $\vt{p}^{\ast}$ that the optimizer $\vt{x}^{\ast}$ is also unique.
Moreover, we obtain $\vt{x}^{\ast} \in \operatorname{ri}(\Delta^{n})$ by $g(\vt{x}^{\ast}) = H(\vt{p}^{\ast}) < +\infty$ because~\eqref{cond:x} holds.
Therefore, $x_j^{\ast} > 0$ for all $j \in N$.
\qed
\end{proof}

\begin{remark}
\label{rem:saddle}
Theorem~\ref{thm:unified} also shows that $\vt{p}^{*}$ attains the inner maximum at
$\vt{x}^{*}$.
Indeed, since $\vt{x}^{*} = \vt{p}^{*}$ and $H(\vt{p}) = \mathcal{L}(\vt{p},\vt{p})$, we have
\begin{align*}
  \mathcal{L}(\vt{x}^{*},\vt{p}^{*}) = H(\vt{p}^{*})
  = \max_{\vt{p} \in \mathcal{U}}\mathcal{L}(\vt{x}^{*},\vt{p}),
\end{align*}
where the second equality holds because $H(\vt{p}^{*})$ is the optimal value
of~\eqref{eq:DRO} and $\vt{x}^{*}$ is its optimal solution.
\end{remark}

Theorem~\ref{thm:unified} recasts DRO probability smoothing as an application of Jaynes's maximum entropy principle~\cite{J_PR1957}: among all distributions in the ambiguity set, the DRO estimator is the one of maximum entropy. The contrast between symmetric $q$-DRO and structure-respecting OT-DRO affects only the shape of the constraint set $\mathcal{U}$, not the underlying principle from which the estimator is derived.

This result is a specialization of the general minimax-entropy duality developed by Gr\"unwald and Dawid~\cite{GD_AS2004}, who established the equivalence between worst-case loss minimization and generalized entropy maximization for arbitrary loss functions. In the case of the cross-entropy loss on the probability simplex, the generalized entropy reduces to the Shannon entropy, and Theorem~\ref{thm:unified} follows as a direct consequence. While the underlying duality is thus classical, what we wish to highlight is the explanatory power of this specialization for the probability smoothing context, which we now demonstrate.

\subsection{Unified Explanations of Structural Properties}
The maximum entropy reformulation yields immediate consequences for several structural properties.

\paragraph{Existence, positivity and uniqueness.} By Theorem~\ref{thm:unified}, for any ambiguity set satisfying Assumption~\ref{asm:ambiguity}, the optimal estimator exists, is unique, and has strictly positive components.
This recovers in one argument the positivity of the $q$-DRO estimator and that of the OT-DRO estimator; cf.\ Proposition~\ref{prop:otdro-pos}, which gives the stronger statement that every feasible solution of~\eqref{eq:otdro-reform} has positive components.
These properties are not specific to either formulation but are universal consequences of the maximum entropy principle.

\paragraph{Threshold conditions.} The uniform distribution $\vt{1}_n/n$ is the global maximizer of $H$ over $\Delta^n$. Therefore, $\vt{x}^* = \vt{1}_n/n$ if and only if $\vt{1}_n/n \in \mathcal{U}(\hat{\vt{p}}, \varepsilon)$. For $\mathcal{U} = \mathcal{U}_q$, this condition is $\|\vt{1}_n/n - \hat{\vt{p}}\|_q \leq \varepsilon$, recovering Theorem~\ref{thm:qdro-threshold}. For $\mathcal{U} = \mathcal{U}_{\mathrm{OT}}$, it is $d_{\mathrm{OT}}(\vt{1}_n/n, \hat{\vt{p}}) \leq \varepsilon$, recovering Theorem~\ref{thm:otdro-threshold}. The parallel threshold structure observed in Remark~\ref{rem:parallel} is thus a direct consequence of the fact that the uniform distribution is the global entropy maximizer over the simplex.

\paragraph{Vanishing of dual multipliers in $q$-DRO.} In the convex reformulation of $q$-DRO~\cite{IKNU_ORL2026}, the nonnegativity constraints $\hat{p}_j + e_j \geq 0$ on the perturbation vector are associated with Lagrange multipliers $\lambda_j \geq 0$. It was shown in~\cite{IKNU_ORL2026} through a KKT analysis that $\lambda_j = 0$ at any optimal solution. The maximum entropy perspective yields an immediate proof: by the complementarity condition, $\lambda_j(\hat{p}_j + e_j^*) = 0$, where
$\vt{e}^{*} = \vt{p}^{*} - \hat{\vt{p}}$ is an optimal solution of the inner problem at
$\vt{x}^{*}$ by Remark~\ref{rem:saddle}; and $\hat{p}_j + e_j^* = p_j^* = x_j^* > 0$ follows from the positivity of the maximum entropy solution established above. The case-by-case argument in~\cite{IKNU_ORL2026} reduces to a special case of this general principle.

\section{Numerical Experiments}
\label{sec:experiments}
We conduct numerical experiments to validate the theoretical findings and to compare the practical performance of Laplace smoothing, $q$-DRO (with $q=2$, hereafter 2-DRO), and OT-DRO.
The central question is whether the geometry of the ambiguity set has a meaningful impact on smoothing quality: does OT-DRO's ability to incorporate inter-category structure translate into improved estimation when such structure is present, and does it degrade performance when no structure exists?
All experiments use MOSEK~\cite{Mosek_11} as the conic optimization solver, and are conducted on a machine with an Intel Core i7-14700F processor (2.10~GHz), 16~GB of memory, running 64-bit Windows~11.

\subsection{Threshold Visualization}
We first visualize the threshold phenomenon for both formulations using $n = 4$ categories and the empirical distribution $\hat{\vt{p}} = (0.10, 0.20, 0.30, 0.40)^\top$.
The degeneracy thresholds predicted by Theorems~\ref{thm:qdro-threshold} and~\ref{thm:otdro-threshold} are $\|\vt{1}_4/4 - \hat{\vt{p}}\|_2 = \sqrt{0.05}\approx 0.224$ for 2-DRO and $d_{\mathrm{OT}}(\vt{1}_4/4, \hat{\vt{p}}) = 1/6\approx 0.167$ for OT-DRO, where the cost matrix is $c_{ij} = |i-j|/(n-1)$.

Figure~\ref{fig:threshold} plots each component of the optimal estimator as the robustness radius $\varepsilon$ increases from $0$ up to and slightly beyond each threshold.
At $\varepsilon = 0$ the ambiguity set is the singleton $\{\hat{\vt{p}}\}$, and the DRO problem reduces to $\min_{\vt{x} \in \Delta^{n}}\mathcal{L}(\vt{x},\hat{\vt{p}})$, whose unique solution is $\hat{\vt{p}}$ itself by the nonnegativity of the Kullback--Leibler divergence, so that no smoothing takes place.
The empirical distribution used here is strictly positive, so Assumption~\ref{asm:ambiguity} still holds at $\varepsilon = 0$ and this left endpoint is well-defined.
We nevertheless assume $\varepsilon>0$ throughout, because for an empirical distribution with zero components the singleton $\{\hat{\vt{p}}\}$ does not meet the relative interior of $\Delta^{n}$ and the assumption fails.
As $\varepsilon$ increases, the components move toward the uniform value $0.25$, reaching it exactly at the predicted threshold (marked by the vertical dotted line) and remaining there for all larger $\varepsilon$.
This confirms Theorems~\ref{thm:qdro-threshold} and~\ref{thm:otdro-threshold}.
The two formulations differ in the trajectory taken toward the uniform distribution: the 2-DRO estimator moves along nearly straight lines, reflecting the symmetric $q^*$-norm penalty, whereas the OT-DRO estimator follows curved paths shaped by the transport cost between adjacent categories.
In particular, the OT-DRO component $x_2$ initially decreases slightly before rising toward $0.25$, a non-monotone behavior that does not occur for 2-DRO; this reflects that mass is redistributed locally between neighboring categories rather than symmetrically across all categories.

\begin{figure}[hbtp]
\begin{center}
\subfloat[$2$-DRO \label{fig:fig_q}]{
\includegraphics[scale=0.35]{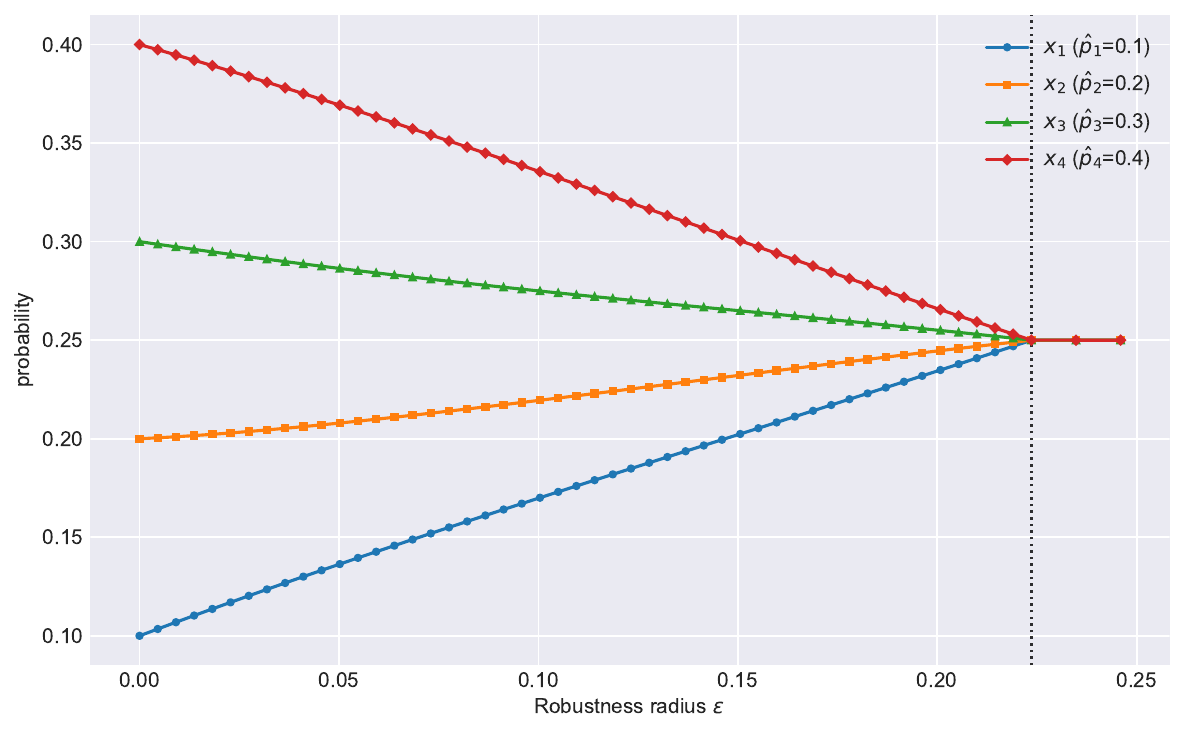}
}
\subfloat[OT-DRO \label{fig:fig_ot}]{
\includegraphics[scale=0.35]{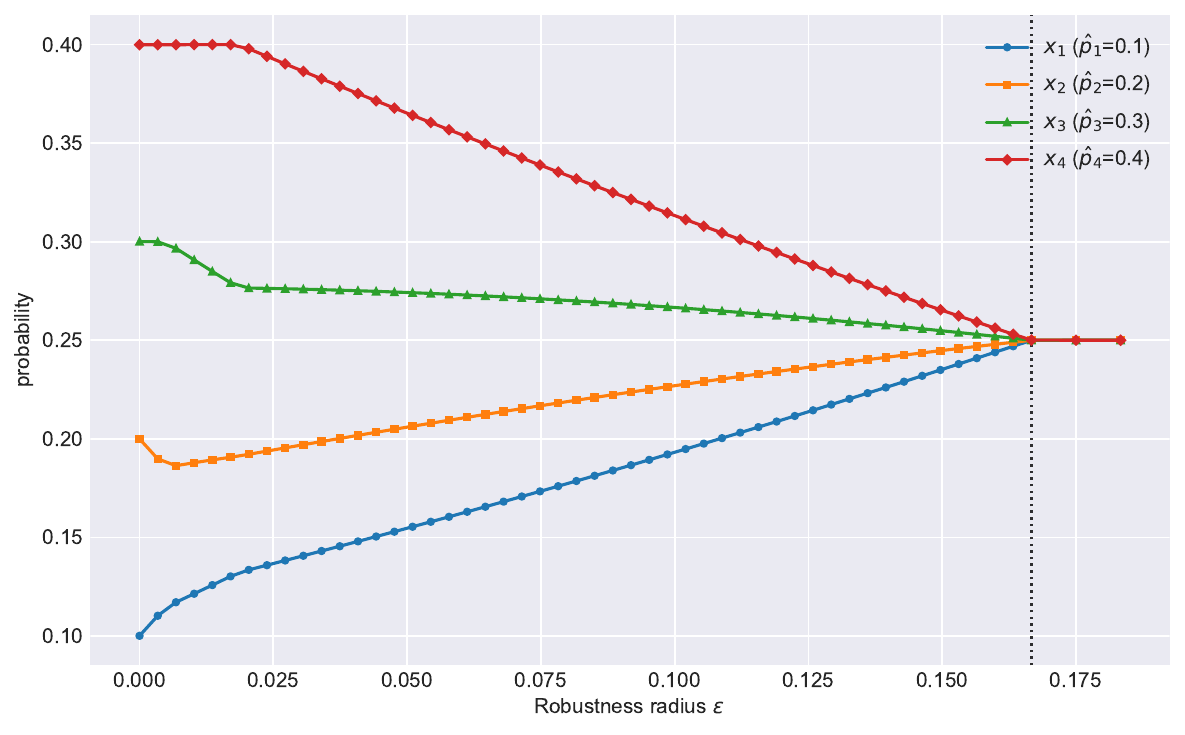}
}
\end{center}
\caption{Sensitivity of the 2-DRO and OT-DRO estimators $\vt{x}$ to the robustness radius.}\label{fig:threshold}
\end{figure}

\subsection{Structured vs.\ Unstructured Synthetic Data}
We next examine whether the choice of ambiguity set geometry has a meaningful impact on smoothing quality.
We consider discrete distributions over $n = 100$ categories and evaluate the three smoothing methods under four settings, grouped into structured and unstructured cases.
For OT-DRO, we use the ordinal cost matrix $c_{ij} = |i-j|/(n-1)$ throughout.

\paragraph{Structured settings.} 
The true distribution is a discretized mixture of three Gaussians on $[1, n]$, whose support has a natural ordinal structure that the cost matrix is designed to capture.
We consider two variants:
\begin{itemize}
    \item \textbf{Symmetric Structured:} The three Gaussian components are equally weighted with $\vt{w} = (1/3, 1/3, 1/3)$, means $\vt{\theta} = (0.2n, 0.5n, 0.8n)$, and standard deviations $\vt{\sigma} = (n/15, n/15, n/15)$.
    This represents a basic structured case where peaks and valleys are evenly distributed.
    \item \textbf{Asymmetric Structured:} The three components have weights $\vt{w} = (0.5, 0.3, 0.2)$, means $\vt{\theta} = (0.2n, 0.6n, 0.85n)$, and standard deviations $\vt{\sigma} = (n/10, n/20, n/20)$.
    This models a more complex practical situation where a broad major peak and sharp minor peaks coexist.
\end{itemize}

\paragraph{Unstructured settings.} 
The true distribution is drawn from a symmetric Dirichlet distribution $\mathrm{Dir}(\kappa \cdot \vt{1})$, whose support has no relation to the ordinal arrangement of the categories.
The same cost matrix $c_{ij} = |i-j|/(n-1)$ is applied to OT-DRO, testing whether it is penalized for assuming a structure that does not exist.
We test two concentration parameters:
\begin{itemize}
    \item \textbf{Dense Unstructured ($\kappa = 1.0$):} A broadly spread random distribution in which most categories have non-negligible probability.
    \item \textbf{Sparse Unstructured ($\kappa = 0.5$):} A distribution whose mass concentrates on a few categories, mimicking a severe zero-frequency regime. We emphasize that, although the empirical distribution is sparse here, the support has no ordinal structure; the sparsity concerns the number of observed categories, not the geometric arrangement that OT-DRO exploits.
\end{itemize}

For the unstructured settings, the true distribution is sampled once and kept fixed throughout the Monte Carlo repetitions.

For each repetition, we draw $m \in \{50, 100\}$ samples from the true distribution.
Since the empirical distribution $\hat{\vt{p}}$, and hence its degeneracy threshold, varies across repetitions, we parametrize the robustness radius by its ratio $\rho \in (0, 1)$ to the threshold, setting $\varepsilon = \rho\,\|\vt{1}_n/n - \hat{\vt{p}}\|_2$ for 2-DRO and $\varepsilon = \rho\,d_{\mathrm{OT}}(\vt{1}_n/n, \hat{\vt{p}})$ for OT-DRO.
Since $\rho < 1$, the estimator never degenerates to the uniform distribution, and the same $\rho$ represents the same relative strength of robustness across repetitions.
We select $\rho$ from the grid $\{0.1, 0.2, \ldots, 0.9\}$ and the Laplace smoothing parameter $a$ from ten logarithmically spaced values in $[10^{-4}, 1]$.
Each hyperparameter is chosen to minimize the KL divergence from the empirical distribution of a validation sample to the estimator.
The validation sample consists of $m_\text{val}=10^{4}$ observations drawn from the true distribution.
It is drawn once for each setting, independently of the training samples, and is shared by all the Monte Carlo repetitions.
All three methods are tuned in the same way.
Performance is measured by the KL divergence from the true distribution, averaged over 50 Monte Carlo repetitions.

\begin{table}[htbp]
\centering
\caption{Mean (standard deviation) of the KL divergence and computation time (in seconds) over 50 Monte Carlo repetitions. Bold indicates the best performance in KL divergence.}
\label{tab:main_results}
\resizebox{\textwidth}{!}{%
\begin{tabular}{llcccccc}
\toprule
 & & \multicolumn{2}{c}{Laplace} & \multicolumn{2}{c}{$2$-DRO} & \multicolumn{2}{c}{OT-DRO} \\
\cmidrule(lr){3-4} \cmidrule(lr){5-6} \cmidrule(lr){7-8}
Setting & $m$ & KL & Time (s) & KL & Time (s) & KL & Time (s) \\
\midrule
\multirow{2}{*}{Symmetric Structured} 
& 50 & 0.2034 (0.0121) & $<0.01$ (0.00) & 0.2056 (0.0124) & 0.10 (0.03) & \textbf{0.0945} (0.0315) & 1.54 (0.49) \\
& 100 & 0.1734 (0.0152) & $<0.01$ (0.00) & 0.1776 (0.0128) & 0.09 (0.01) & \textbf{0.0580} (0.0223) & 1.82 (0.26) \\
\midrule
\multirow{2}{*}{Asymmetric Structured} 
& 50 & 0.2168 (0.0129) & $<0.01$ (0.00) & 0.2189 (0.0131) & 0.10 (0.02) & \textbf{0.1009} (0.0364) & 1.41 (0.24) \\
& 100& 0.1844 (0.0162) & $<0.01$ (0.00) & 0.1899 (0.0143) & 0.09 (0.00) & \textbf{0.0658} (0.0198) & 1.91 (0.22) \\
\midrule
\multirow{2}{*}{Unstructured ($\kappa=1.0$)} 
 & 50 & \textbf{0.3059} (0.0345) & $<0.01$ (0.00) & 0.3103 (0.0310) & 0.11 (0.01) & 0.3965 (0.0341) & 1.42 (0.16) \\
& 100 & \textbf{0.2398} (0.0291) & $<0.01$ (0.00) & \textbf{0.2398} (0.0284) & 0.10 (0.01) & 0.3090 (0.0457) & 1.77 (0.15) \\
\midrule
\multirow{2}{*}{Unstructured ($\kappa=0.5$)} 
 &50 & \textbf{0.4163} (0.0604) & $<0.01$ (0.00) & 0.4189 (0.0607) & 0.10 (0.01) & 0.6399 (0.0711) & 1.14 (0.11) \\
& 100& \textbf{0.2845} (0.0455) & $<0.01$ (0.00) & 0.2870 (0.0483) & 0.10 (0.00) & 0.4516 (0.0739) & 1.51 (0.15) \\
\bottomrule
\end{tabular}%
}
\end{table}

Table~\ref{tab:main_results} summarizes the results.
The effectiveness of each estimator depends heavily on the alignment between the underlying data structure and the geometry of the ambiguity set.

\begin{figure}[htbp]
\centering
\includegraphics[width=\textwidth]{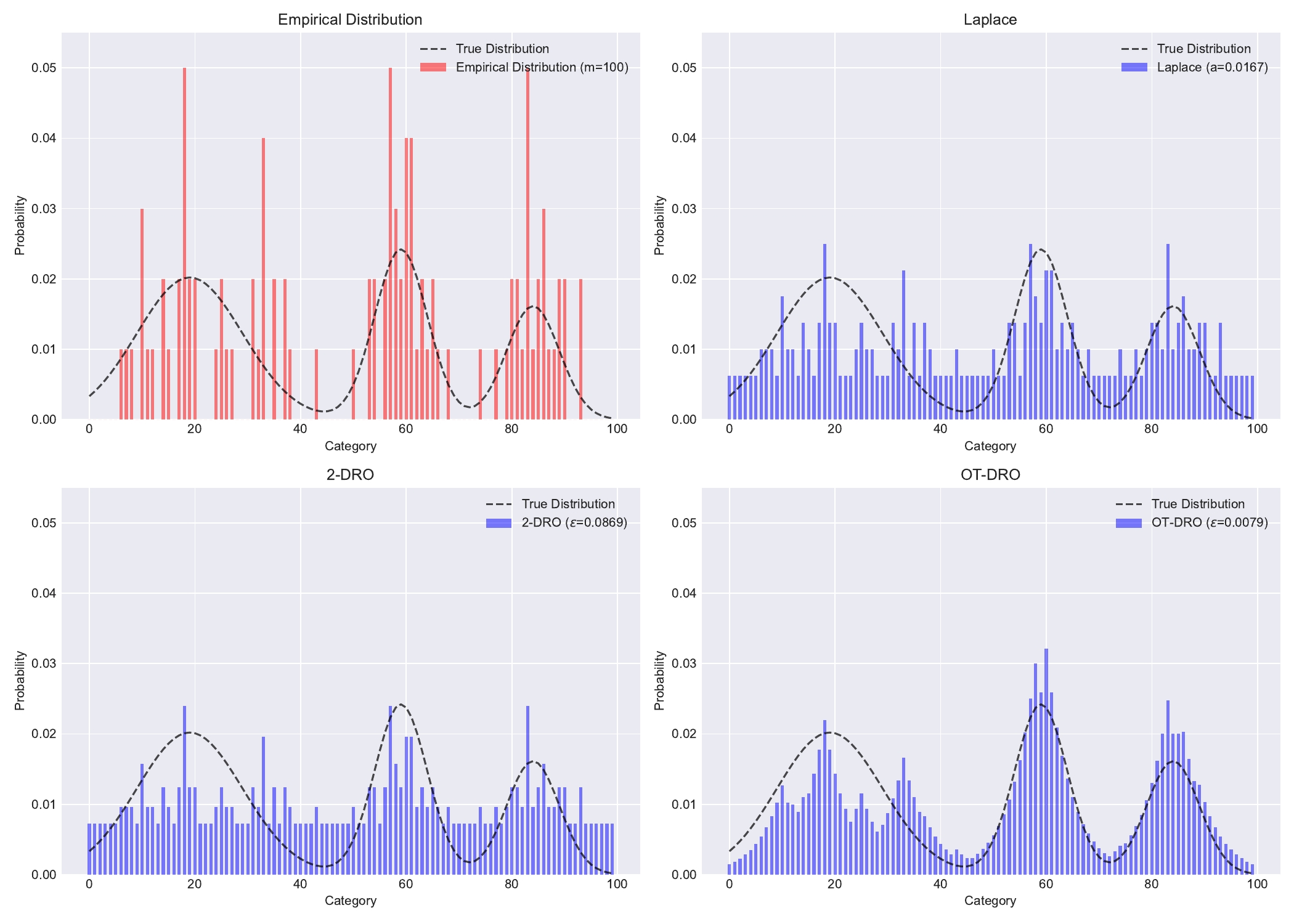}
\caption{Estimated distributions in the Asymmetric Structured setting ($n=100$, $m=100$).}
\label{fig:asym_struct}
\end{figure}

\begin{figure}[htbp]
\centering
\includegraphics[width=\textwidth]{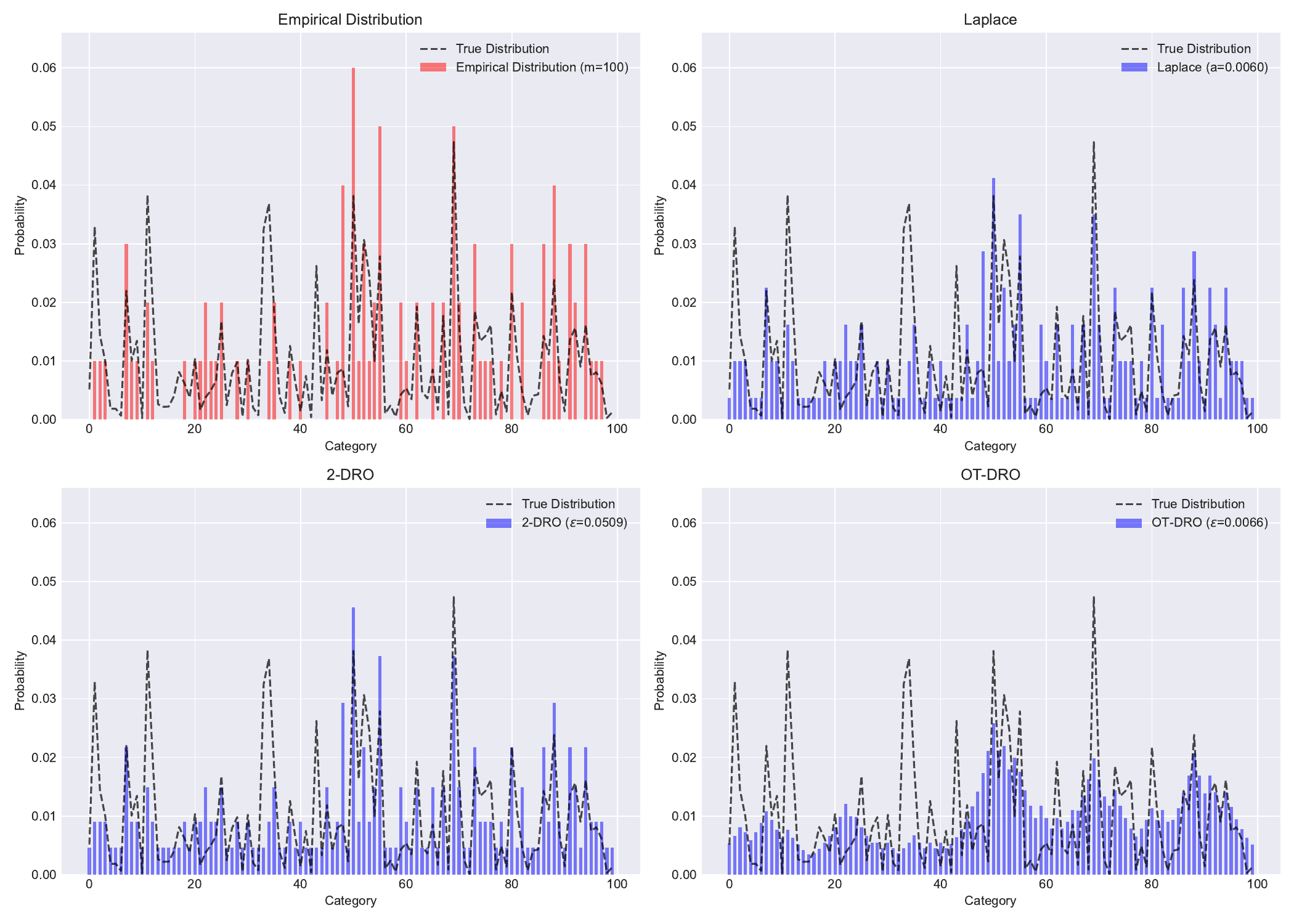}
\caption{Estimated distributions in the Dense Unstructured setting ($n=100$, $m=100$, $\kappa=1.0$).}
\label{fig:dense_unstruct}
\end{figure}

In the structured settings, OT-DRO substantially outperforms Laplace smoothing and 2-DRO, achieving less than half of their KL divergence.
Figure~\ref{fig:asym_struct} explains this advantage visually.
Given the noisy empirical distribution, Laplace smoothing and 2-DRO redistribute probability mass almost uniformly across all categories, flattening the estimate and blurring the three peaks of the true distribution.
In contrast, OT-DRO redistributes mass locally, guided by the cost matrix: it smooths within each peak while preserving the separation between peaks, thereby recovering a cleaner reconstruction of the underlying multimodal structure.
This behavior persists in the asymmetric case, where the broad major peak and the sharp minor peaks are captured without being averaged out.

In the unstructured settings, however, OT-DRO incurs a distinct penalty.
As Figure~\ref{fig:dense_unstruct} shows, the true Dirichlet distribution has no ordinal structure, so neighboring categories are unrelated.
By coupling adjacent categories through the cost matrix, OT-DRO oversmooths the estimate, dampening the sharp category-to-category fluctuations that the symmetric methods retain more faithfully.
Consequently, Laplace smoothing and 2-DRO produce lower KL divergence and remain the preferable choice when no reliable structural information is available.

This contrast is precisely the behavior predicted by the theory: the ambiguity set geometry should be matched to the structure of the support, and OT-DRO and $q$-DRO are complementary tools for the two regimes.

Finally, the computation-time columns of Table~\ref{tab:main_results} show that OT-DRO's accuracy comes at a computational cost: each OT-DRO instance takes approximately $1.5$ seconds, compared with approximately $0.1$ seconds for 2-DRO and negligible time for Laplace smoothing, reflecting the $O(n^2)$ constraints in its reformulation~\eqref{eq:otdro-reform}.

\section{Conclusion}
\label{sec:conclusion}
In this paper, we studied distributionally robust optimization for the probability smoothing problem from a unifying viewpoint.
Motivated by the observation that Symmetry forces every unobserved category to receive identical probability, we introduced an optimal transport distributionally robust formulation (OT-DRO) that incorporates inter-category structure through a cost matrix, thereby enabling structure-respecting smoothing.
We derived a tractable reformulation of OT-DRO as an empirical cross-entropy minimization with a Lipschitz-type penalty on the log-probability vector, established the positivity of the resulting estimator, and identified the explicit degeneracy threshold $d_{\mathrm{OT}}(\vt{1}_n/n, \hat{\vt{p}})$, which parallels the threshold $\|\vt{1}_n/n - \hat{\vt{p}}\|_q$ of $q$-DRO.

The central message of the paper is that these parallel structures are not coincidental.
Through Sion's minimax theorem, we showed that distributionally robust probability smoothing over any compact convex ambiguity set that contains a strictly positive distribution is equivalent to entropy maximization over that set.
This maximum entropy principle explains, within a single framework, the positivity of the estimator, its uniqueness, and the threshold conditions.
It also clarifies that the contrast between the symmetric $q$-DRO and the structure-respecting OT-DRO lies entirely in the shape of the ambiguity set, not in the estimation principle itself.
Our numerical experiments confirmed this view: OT-DRO outperforms the symmetric methods when the underlying distribution respects the category structure, while the symmetric methods remain preferable when no such structure exists.

\newpage
\appendix

\section{Omitted Proofs}\label{app:threshold}
\subsection{Proof of Threshold Condition of $q$-DRO}
\label{app:threshold_qdro}
By Theorem~\ref{thm:qdro-equiv}, $\vt{x}^*$ is optimal for the $q$-DRO problem~\eqref{eq:qdro} if and only if there exists $\beta^*$ such that $(\vt{x}^*, \beta^*)$ is optimal for~\eqref{eq:qdro-simplified}. We therefore establish the threshold condition for~\eqref{eq:qdro-simplified}.
Here, $\partial$ denotes the convex subdifferential, and the subgradient of the composite penalty is obtained by the chain rule for the composition of the convex norm with the continuously differentiable map $-\log(\vt{x}) - \beta\vt{1}_n$.

The proof relies on the following standard fact about the subdifferential of a norm.
\begin{lemma}[{\cite[Chap.~VI, \S3]{HU_2011}}]
\label{lem:subgrad}
Let $\|\cdot\|_{q^*}$ be the $q^*$-norm on $\mathbb{R}^n$, where $q^*$ is the H\"older conjugate of $q$ satisfying $1/q + 1/q^* = 1$. Then, for any $\vt{u} \in \mathbb{R}^n$, a vector $\vt{s}$ belongs to $\partial\|\cdot\|_{q^*}(\vt{u})$ if and only if $\|\vt{s}\|_q \leq 1$ and $\vt{s}^\top\vt{u} = \|\vt{u}\|_{q^*}$. In particular, at $\vt{u} = \vt{0}$, the second condition holds trivially, so $\vt{s} \in \partial\|\cdot\|_{q^*}(\vt{0})$ if and only if $\|\vt{s}\|_q \leq 1$.
\end{lemma}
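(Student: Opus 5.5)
The statement to prove is Lemma~\ref{lem:subgrad}: the standard characterization of $\partial\|\cdot\|_{q^*}(\vt{u})$ as the set of $\vt{s}$ with $\|\vt{s}\|_q \le 1$ and $\vt{s}^\top\vt{u} = \|\vt{u}\|_{q^*}$. The plan is to go through conjugate duality, using two facts. The $q$-norm is the dual norm of the $q^*$-norm, i.e. $\|\vt{u}\|_{q^*} = \max_{\|\vt{s}\|_q\le 1}\vt{s}^\top\vt{u}$, which is H\"older's inequality together with its equality case. The Fenchel--Young equality then characterizes subgradients.

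\textbf{Step 1 (the conjugate).} Compute the convex conjugate $f^*$ of $f=\|\cdot\|_{q^*}$, obtaining $f^*=\iota_B$, the indicator of the unit ball $B=\{\vt{s}:\|\vt{s}\|_q\le1\}$.
\begin{itemize}
\item If $\|\vt{s}\|_q\le 1$, H\"older gives $\vt{s}^\top\vt{u}-\|\vt{u}\|_{q^*}\le 0$ for all $\vt{u}$, with equality at $\vt{u}=\vt{0}$, so $f^*(\vt{s})=0$.
\item If $\|\vt{s}\|_q>1$, the dual-norm identity (applied with the roles of $q$ and $q^*$ swapped) gives some $\vt{v}$ with $\|\vt{v}\|_{q^*}=1$ and $\vt{s}^\top\vt{v}>1$. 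Taking $\vt{u}=t\vt{v}$ with $t\to\infty$ sends $\vt{s}^\top\vt{u}-\|\vt{u}\|_{q^*}$ to $+\infty$, so $f^*(\vt{s})=+\infty$.
\end{itemize}

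\textbf{Step 2 (subgradients).} For a proper lsc convex $f$, $\vt{s}\in\partial f(\vt{u})$ holds if and only if $f(\vt{u})+f^*(\vt{s})=\vt{s}^\top\vt{u}$. With $f^*=\iota_B$, this is exactly $\|\vt{s}\|_q\le1$ together with $\vt{s}^\top\vt{u}=\|\vt{u}\|_{q^*}$.

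\textbf{Direct check (optional).} The same conclusion can be verified without invoking Fenchel--Young.
\begin{itemize}
\item \emph{Sufficiency.} Given both conditions, H\"older yields $\|\vt{w}\|_{q^*}\ge \vt{s}^\top\vt{w}=\|\vt{u}\|_{q^*}+\vt{s}^\top(\vt{w}-\vt{u})$ for all $\vt{w}$, which is the subgradient inequality.
\item \emph{Necessity.} Take the subgradient inequality at $\vt{w}=\vt{0}$ and at $\vt{w}=2\vt{u}$. These give $\vt{s}^\top\vt{u}\ge\|\vt{u}\|_{q^*}$ and $\vt{s}^\top\vt{u}\le\|\vt{u}\|_{q^*}$ respectively, hence equality. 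Taking $\vt{w}=\vt{u}+\vt{v}$ and using the triangle inequality gives $\vt{s}^\top\vt{v}\le\|\vt{v}\|_{q^*}$ for all $\vt{v}$, so $\|\vt{s}\|_q\le1$ by duality.
\end{itemize}

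\textbf{The case $\vt{u}=\vt{0}$.} Here the equality condition reads $0=0$, so it holds trivially and only $\|\vt{s}\|_q\le1$ remains.

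\textbf{Main obstacle.} The only nontrivial input is the dual-norm identity $\|\vt{s}\|_q=\max_{\|\vt{v}\|_{q^*}\le1}\vt{s}^\top\vt{v}$, including the endpoint cases $q\in\{1,\infty\}$. For $1<q<\infty$, attainment comes from the explicit maximizer $v_j\propto \operatorname{sign}(s_j)|s_j|^{q-1}$. For $q=1$ take $\vt{v}=\operatorname{sign}(\vt{s})$, and for $q=\infty$ take $\vt{v}$ to be a signed unit vector at a maximal coordinate of $\vt{s}$.
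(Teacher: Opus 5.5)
Your proof is correct, and it takes a somewhat different route from the paper. The paper gives no argument of its own for this lemma; it defers to Hiriart-Urruty and Lemar\'echal. There the result appears as a first example of subdifferential calculus. One observes that $\|\cdot\|_{q^*}$ is the support function $\sigma_B$ of the ball $B=\{\vt{s}:\|\vt{s}\|_q\le 1\}$. One then uses that $\partial\sigma_B(\vt{u})$ is the face of $B$ exposed by $\vt{u}$, namely $\{\vt{s}\in B:\vt{s}^\top\vt{u}=\sigma_B(\vt{u})\}$.

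Your Steps 1--2 are that same fact in conjugate form: $\sigma_B^*=\iota_B$, combined with the Fenchel--Young equality. Incidentally, that characterization of $\partial f(\vt{u})$ only needs $f(\vt{u})$ finite, not lower semicontinuity.

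Your ``direct check'' is the genuinely different route. It is more elementary and fully self-contained, using only H\"older's inequality, the triangle inequality, and attainment in the dual-norm identity. What it buys is independence from any support-function or conjugacy machinery. What the textbook route buys is that the result drops out of a general formula valid for every norm.

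You also correctly supply the attainment step, which is the only nontrivial input. You handle $1<q<\infty$ via the explicit maximizer, and the endpoint cases $q\in\{1,\infty\}$ separately. The endpoints matter here, since Theorem~\ref{thm:qdro-threshold} is stated for all $q\in[1,\infty]$.
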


We now derive the optimality conditions for $q$-DRO~\eqref{eq:qdro-simplified}.
The Lagrangian $L(\vt{x},\beta,\gamma)$ for $q$-DRO is given by
\begin{align*}
  L(\vt{x},\beta,\gamma)&=\sum_{j=1}^{n}\hat{p}_{j}(-\log{x_{j}})+\varepsilon\cdot \|-\log(\vt{x})-\beta \vt{1}_{n}\|_{q^{*}}+\gamma\left(\sum_{j=1}^{n}x_{j}-1 \right).
\end{align*}
Note that we do not need to introduce multipliers for the nonnegativity constraints ($x_{j} \ge 0$): by Theorem~\ref{thm:qdro-equiv} the $\vt{x}$-component of an optimal solution of~\eqref{eq:qdro-simplified} is optimal for~\eqref{eq:qdro}, and Theorem~\ref{thm:unified} then gives $x_{j}>0$ for all $j \in N$.

The generalized KKT conditions state that at an optimal solution $(\vt{x},\beta)$, there exists a multiplier $\gamma$ and a subgradient $\vt{s} \in \partial(\| -\log(\vt{x})-\beta \vt{1}_{n} \|_{q^{*}})$ satisfying the following conditions:
\begin{align}
  -\frac{\hat{p}_{j}}{x_{j}}+\gamma-\frac{\varepsilon s_{j}}{x_{j}}&=0 \quad (j \in N), \label{eq:kkt1_q}\\
  \varepsilon \sum_{j=1}^{n}s_{j}&=0. \label{eq:kkt2_q}
\end{align}
Multiplying the first condition~\eqref{eq:kkt1_q} by $x_{j}$ and summing over all $j \in N$ yields $-\sum_{j=1}^{n}\hat{p}_{j} + \gamma\sum_{j=1}^{n}x_{j} - \varepsilon\sum_{j=1}^{n}s_{j} = 0$.
Since $\sum_{j=1}^{n}\hat{p}_{j} = 1$, $\sum_{j=1}^{n}x_{j} = 1$ and~\eqref{eq:kkt2_q} hold, we obtain $\gamma = 1$.
Thus, the KKT conditions require $\varepsilon s_{j} = x_{j} - \hat{p}_{j}$ for all $j \in N$. 

\begin{proof}[of Theorem~\ref{thm:qdro-threshold}]
  ($\Rightarrow$) Suppose that the uniform distribution $\vt{x}^* = \vt{1}_{n}/n$ is an optimal solution of $q$-DRO.
  By setting $\beta^* = \log n$, which makes the penalty term vanish, we have $-\log(\vt{x}^*) - \beta^*\vt{1}_{n} = \vt{0}$.
  By Lemma~\ref{lem:subgrad} applied at the origin $\vt{0}$, the subgradient $\vt{s} \in \partial\|\cdot\|_{q^*}(\vt{0})$ satisfies $\|\vt{s}\|_q \leq 1$.
  Substituting $\vt{x}^* = \vt{1}_{n}/n$ into the relation $\varepsilon\vt{s} = \vt{x}^* - \hat{\vt{p}}$ derived from the KKT conditions, we obtain $\varepsilon\vt{s} = \vt{1}_{n}/n - \hat{\vt{p}}$.
  Taking the $q$-norm of both sides, we obtain $\|\vt{1}_{n}/n - \hat{\vt{p}}\|_q = \varepsilon\|\vt{s}\|_q \le \varepsilon$.

  ($\Leftarrow$) Conversely, suppose $\|\vt{1}_{n}/n - \hat{\vt{p}}\|_q \le \varepsilon$.
  By H\"older's inequality, for any vector $\vt{u} \in \mathbb{R}^n$, we have
  \begin{align*}
    (\vt{1}_{n}/n-\hat{\vt{p}})^{\top}\vt{u} \le \|\vt{1}_{n}/n-\hat{\vt{p}}\|_{q}\|\vt{u}\|_{q^{*}} \le \varepsilon\|\vt{u}\|_{q^{*}}.
  \end{align*}
  Setting $\vt{u} = -\log(\vt{x}) - \beta\vt{1}_{n}$, the objective function value for any feasible $(\vt{x}, \beta)$ is bounded below as follows:
  \begin{align*}
    \sum_{j=1}^{n}\hat{p}_{j}(-\log{x_{j}})+\varepsilon\cdot \|-\log(\vt{x})-\beta\vt{1}_{n}\|_{q^{*}}
    &\ge \sum_{j=1}^{n}\hat{p}_{j}(-\log{x_{j}})+(\vt{1}_{n}/n-\hat{\vt{p}})^{\top}(-\log(\vt{x})-\beta\vt{1}_{n})\\
    &=\frac{1}{n}\sum_{j=1}^{n}(-\log{x_{j}})\\
    &\ge -\log \left(\frac{1}{n}\sum_{j=1}^{n}x_{j} \right)
    =\log{n},
  \end{align*}
  where the last inequality follows from Jensen's inequality. Thus, the optimal value of~\eqref{eq:qdro-simplified} is at least $\log n$. 
  
  On the other hand, the feasible solution $(\vt{x}, \beta) = (\vt{1}_{n}/n, \log n)$ attains the objective value $\log n$. Therefore, the uniform distribution $\vt{x} = \vt{1}_{n}/n$ is an optimal solution for~\eqref{eq:qdro-simplified}.
  By Theorem~\ref{thm:qdro-equiv}, the uniform distribution $\vt{1}_n/n$ is therefore an optimal solution of the $q$-DRO model~\eqref{eq:qdro}.
  \qed
\end{proof}

\subsection{Proof of Lemma~\ref{lem:otdro-set}}
\label{app:ambiguity}
We show that $\mathcal{U}_{\mathrm{OT}}(\hat{\vt{p}},\varepsilon)$ satisfies the conditions in Assumption~\ref{asm:ambiguity}.
Since $\mathcal{U}_{\mathrm{OT}}(\hat{\vt{p}},\varepsilon)$ is a subset of $\Delta^{n}$, it is bounded.
Thus, it is compact if it is closed.
The main part of the proof is therefore to show convexity and closedness.

\begin{proof}[Proof of Lemma~\ref{lem:otdro-set}]
In this proof, we write $\langle C,\vt{y}\rangle = \sum_{i=1}^{n}\sum_{j=1}^{n}c_{ij}y_{ij}$ for the cost of a transport plan $\vt{y}$, and $d(\vt{p}) = d_{\mathrm{OT}}(\vt{p},\hat{\vt{p}})$.
Recall that $\vt{y}$ is a transport plan from $\hat{\vt{p}}$ to $\vt{p}$ if
\begin{align}
\label{eq:plan}
  y_{ij} \ge 0 \ (i,j \in N), \qquad
  \sum_{j=1}^{n}y_{ij} = \hat{p}_{i} \ (i \in N), \qquad
  \sum_{i=1}^{n}y_{ij} = p_{j} \ (j \in N),
\end{align}
and that $d(\vt{p})$ is the minimum of $\langle C,\vt{y}\rangle$ over all such plans.
For each $\vt{p} \in \Delta^{n}$, this minimum is attained.
Indeed, it is the optimal value of an LP whose feasible region is nonempty and bounded.
The feasible region is nonempty because it contains the plan $y_{ij} = \hat{p}_{i}p_{j}$, and it is bounded because~\eqref{eq:plan} implies $0 \le y_{ij} \le \hat{p}_{i} \le 1$.

\emph{Nonemptiness.}
Let $y_{ij} = \hat{p}_{i}\delta_{ij}$, where $\delta_{ij}$ is the Kronecker delta.
Then $\vt{y}$ is a transport plan from $\hat{\vt{p}}$ to $\hat{\vt{p}}$, and $\langle C,\vt{y}\rangle = \sum_{i=1}^{n}c_{ii}\hat{p}_{i} = 0 \le \varepsilon$.
Hence, $d(\hat{\vt{p}}) = 0$ and $\hat{\vt{p}} \in \mathcal{U}_{\mathrm{OT}}(\hat{\vt{p}},\varepsilon)$.

\emph{Convexity.}
Let $\vt{p}^{1},\vt{p}^{2} \in \Delta^{n}$ and $t \in [0,1]$.
Let $\vt{y}^{1}$ and $\vt{y}^{2}$ be optimal transport plans from $\hat{\vt{p}}$ to $\vt{p}^{1}$ and to $\vt{p}^{2}$, respectively.
Since the constraints in~\eqref{eq:plan} are linear, we have
\begin{align*}
  t y^{1}_{ij} + (1-t) y^{2}_{ij} \ge 0, \qquad
  \sum_{j=1}^{n}\big(ty^{1}_{ij}+(1-t)y^{2}_{ij}\big) = \hat{p}_{i}, \qquad
  \sum_{i=1}^{n}\big(ty^{1}_{ij}+(1-t)y^{2}_{ij}\big) = tp^{1}_{j}+(1-t)p^{2}_{j}.
\end{align*}
Thus, $t\vt{y}^{1}+(1-t)\vt{y}^{2}$ is a transport plan from $\hat{\vt{p}}$ to $t\vt{p}^{1}+(1-t)\vt{p}^{2}$.
Since the cost $\langle C,\cdot\rangle$ is linear, we obtain
\begin{align*}
  d\big(t\vt{p}^{1}+(1-t)\vt{p}^{2}\big)
  \le \big\langle C,\, t\vt{y}^{1}+(1-t)\vt{y}^{2}\big\rangle
  = t\langle C,\vt{y}^{1}\rangle + (1-t)\langle C,\vt{y}^{2}\rangle
  = t\,d(\vt{p}^{1}) + (1-t)\,d(\vt{p}^{2}).
\end{align*}
Therefore, $d$ is convex on $\Delta^{n}$, and its sublevel set $\mathcal{U}_{\mathrm{OT}}(\hat{\vt{p}},\varepsilon)$ is convex.

\emph{Closedness.}
Let $\{\vt{p}^{k}\} \subseteq \mathcal{U}_{\mathrm{OT}}(\hat{\vt{p}},\varepsilon)$ be a sequence converging to $\bar{\vt{p}}$.
For each $k$, let $\vt{y}^{k}$ be a transport plan from $\hat{\vt{p}}$ to $\vt{p}^{k}$ such that $\langle C,\vt{y}^{k}\rangle \le \varepsilon$.
Since $0 \le y^{k}_{ij} \le \hat{p}_{i} \le 1$, the sequence $\{\vt{y}^{k}\}$ is bounded.
Thus, it has a subsequence $\{\vt{y}^{k_{m}}\}$ that converges to some $\bar{\vt{y}}$.
We have
\begin{align*}
  y^{k_{m}}_{ij} \ge 0, \qquad
  \sum_{j=1}^{n}y^{k_{m}}_{ij} = \hat{p}_{i}, \qquad
  \sum_{i=1}^{n}y^{k_{m}}_{ij} = p^{k_{m}}_{j}, \qquad
  \langle C,\vt{y}^{k_{m}}\rangle \le \varepsilon .
\end{align*}
Letting $m \to \infty$, we obtain
\begin{align*}
  \bar{y}_{ij} \ge 0, \qquad
  \sum_{j=1}^{n}\bar{y}_{ij} = \hat{p}_{i}, \qquad
  \sum_{i=1}^{n}\bar{y}_{ij} = \bar{p}_{j}, \qquad
  \langle C,\bar{\vt{y}}\rangle \le \varepsilon .
\end{align*}
This means that $\bar{\vt{y}}$ is a transport plan from $\hat{\vt{p}}$ to $\bar{\vt{p}}$ whose cost is at most $\varepsilon$.
Hence, $d(\bar{\vt{p}}) \le \varepsilon$.
Since $\Delta^{n}$ is closed, we also have $\bar{\vt{p}} \in \Delta^{n}$.
Therefore, $\bar{\vt{p}} \in \mathcal{U}_{\mathrm{OT}}(\hat{\vt{p}},\varepsilon)$, and the set is closed.

\emph{Intersection with the relative interior.}
For $t \in (0,1]$, let $\vt{p}_{t} = (1-t)\hat{\vt{p}} + t\,\vt{1}_{n}/n$.
All the components of $\vt{p}_{t}$ are at least $t/n>0$.
We define
\begin{align*}
  y^{t}_{ij} = (1-t)\hat{p}_{i}\delta_{ij} + \frac{t\,\hat{p}_{i}}{n} \qquad (i,j \in N).
\end{align*}
Then $y^{t}_{ij} \ge 0$, and
\begin{align*}
  \sum_{j=1}^{n}y^{t}_{ij} = (1-t)\hat{p}_{i} + t\,\hat{p}_{i} = \hat{p}_{i},
  \qquad
  \sum_{i=1}^{n}y^{t}_{ij} = (1-t)\hat{p}_{j} + \frac{t}{n} = (\vt{p}_{t})_{j}.
\end{align*}
Thus, $\vt{y}^{t}$ is a transport plan from $\hat{\vt{p}}$ to $\vt{p}_{t}$.
Since $c_{ii}=0$ and $\sum_{i}\hat{p}_{i}=1$, its cost satisfies
\begin{align*}
  \langle C,\vt{y}^{t}\rangle
  = \frac{t}{n}\sum_{i=1}^{n}\hat{p}_{i}\sum_{j=1}^{n}c_{ij}
  \le t\max_{i,j \in N}\{ c_{ij} \}.
\end{align*}
The right-hand side converges to $0$ as $t \to +0$.
Hence, $d(\vt{p}_{t}) \le \varepsilon$ for all sufficiently small $t>0$.
For such $t$, the point $\vt{p}_{t}$ belongs to $\mathcal{U}_{\mathrm{OT}}(\hat{\vt{p}},\varepsilon)$, and all of its components are positive.\qed
\end{proof}

\subsection{Proof of Theorem~\ref{thm:otdro-equiv}}
\label{app:solvable}
We first recall the two problems in the theorem.
The OT-DRO model~\eqref{eq:otdro} and its reformulation~\eqref{eq:otdro-reform} can be written as the following problems:
\begin{align*}
  \text{\eqref{eq:otdro}:} \quad
  & \min_{\vt{x} \in \Delta^{n}} \;
    \underbrace{\max_{\vt{p} \in \mathcal{U}_{\mathrm{OT}}(\hat{\vt{p}},\varepsilon)}
    \mathcal{L}(\vt{x},\vt{p})}_{=:\, g(\vt{x})}, \\
  \text{\eqref{eq:otdro-reform}:} \quad
  & \min_{\vt{x} \in \Delta^{n}} \;
    \underbrace{\inf_{\alpha \ge 0,\, \vt{\mu} \in \bb{R}^{n}}
    \left\{\sum_{i=1}^{n}\hat{p}_{i}\mu_{i}+\varepsilon\alpha \;\middle|\;
    -\log x_{j}-c_{ij}\alpha \le \mu_{i} \ (i,j \in N)\right\}}_{=:\, h(\vt{x})}.
\end{align*}
For a fixed $\vt{x} \in \Delta^{n}$, the inner problem of~\eqref{eq:otdro} is problem~\eqref{eq:otdro-inner-p}, and it has the same optimal value as problem~\eqref{eq:otdro-inner} by Section~\ref{subsec:inner}.
The inner problem of~\eqref{eq:otdro-reform} is problem~\eqref{eq:otdro-inner-dual}, which is the dual problem of~\eqref{eq:otdro-inner} derived in Section~\ref{subsec:dual}.
Hence, by Lemma~\ref{lem:otdro-inner-dual}, we have
\begin{align}
\label{eq:otdro-gh}
  g(\vt{x})
  = \text{optimal value of~\eqref{eq:otdro-inner-p}}
  = \text{optimal value of~\eqref{eq:otdro-inner}}
  = \text{optimal value of~\eqref{eq:otdro-inner-dual}}
  = h(\vt{x})
  \qquad (\vt{x} \in \Delta^{n}).
\end{align}
If some component of $\vt{x}$ is zero, all the values in~\eqref{eq:otdro-gh} are $+\infty$.
The proof below is based on~\eqref{eq:otdro-gh} and on Theorem~\ref{thm:unified}, which gives an optimal solution of~\eqref{eq:otdro}.
Note that Theorem~\ref{thm:unified} holds for any ambiguity set that satisfies Assumption~\ref{asm:ambiguity}, and it does not depend on the reformulation~\eqref{eq:otdro-reform}.

\begin{proof}[of Theorem~\ref{thm:otdro-equiv}]
Let $F$ be the feasible region of~\eqref{eq:otdro-reform}, and let $f(\vt{x},\alpha,\vt{\mu}) = \sum_{i=1}^{n}\hat{p}_{i}\mu_{i}+\varepsilon\alpha$ be its objective function.
Note that $f$ does not depend on $\vt{x}$, and it is also the objective function
of~\eqref{eq:otdro-inner-dual}.
Since the infimum of $f(\vt{x},\alpha,\vt{\mu})$ over $F$ can be taken first with respect to $(\alpha,\vt{\mu})$
and then with respect to $\vt{x}$, we have
\begin{align}
\label{eq:otdro-reform-value}
  \inf_{(\vt{x},\alpha,\vt{\mu})\in F}f(\vt{x},\alpha,\vt{\mu}) = \inf_{\vt{x} \in \Delta^{n}}h(\vt{x}) .
\end{align}

\emph{Step 1: equality of the optimal values.}
Taking the infimum of~\eqref{eq:otdro-gh} over $\vt{x} \in \Delta^{n}$ and using~\eqref{eq:otdro-reform-value}, we obtain
\begin{align}
\label{eq:otdro-values}
  \inf_{\vt{x} \in \Delta^{n}}g(\vt{x}) = \inf_{\vt{x} \in \Delta^{n}}h(\vt{x}) = \inf_{(\vt{x},\alpha,\vt{\mu})\in F}f(\vt{x},\alpha,\vt{\mu}) .
\end{align}
Thus, the OT-DRO model~\eqref{eq:otdro} and problem~\eqref{eq:otdro-reform} have the same optimal value.

\emph{Step 2: solvability and the ``only if'' part.}
By Lemma~\ref{lem:otdro-set}, the set $\mathcal{U}_{\mathrm{OT}}(\hat{\vt{p}},\varepsilon)$ satisfies Assumption~\ref{asm:ambiguity}.
Hence, by Theorem~\ref{thm:unified}, the OT-DRO model~\eqref{eq:otdro} has a unique optimal solution $\vt{x}^{*}$, and $x^{*}_{j}>0$ for all $j \in N$.
By Lemma~\ref{lem:otdro-inner-dual}, problem~\eqref{eq:otdro-inner-dual} with $\vt{x} = \vt{x}^{*}$ has an optimal solution $(\alpha^{*},\vt{\mu}^{*})$.
Since $\vt{x}^{*} \in \Delta^{n}$ and $(\alpha^{*},\vt{\mu}^{*})$ satisfies the constraints of~\eqref{eq:otdro-inner-dual} with $\vt{x} = \vt{x}^{*}$, the point
$(\vt{x}^{*},\alpha^{*},\vt{\mu}^{*})$ belongs to $F$.
Then
\begin{align*}
  f(\vt{x}^{*},\alpha^{*},\vt{\mu}^{*}) = h(\vt{x}^{*}) \overset{\eqref{eq:otdro-gh}}{=} g(\vt{x}^{*})
  = \min_{\vt{x} \in \Delta^{n}}g(\vt{x}) \overset{\eqref{eq:otdro-values}}{=} \inf_{(\vt{x},\alpha,\vt{\mu})\in F}f(\vt{x},\alpha,\vt{\mu}).
\end{align*}
Hence, $(\vt{x}^{*},\alpha^{*},\vt{\mu}^{*})$ is an optimal solution of~\eqref{eq:otdro-reform}.
This shows that both problems have optimal solutions.
It also shows the ``only if'' part of the last statement, because $\vt{x}^{*}$ is the unique optimal solution of~\eqref{eq:otdro}.

\emph{Step 3: the ``if'' part.}
Suppose that $(\bar{\vt{x}},\bar{\alpha},\bar{\vt{\mu}})$ is an optimal solution of~\eqref{eq:otdro-reform}.
Since $(\bar{\alpha},\bar{\vt{\mu}})$ is a feasible solution of~\eqref{eq:otdro-inner-dual} with $\vt{x} = \bar{\vt{x}}$, we have $h(\bar{\vt{x}}) \le f(\bar{\vt{x}},\bar{\alpha},\bar{\vt{\mu}})$.
Hence,
\begin{align*}
  g(\bar{\vt{x}}) \overset{\eqref{eq:otdro-gh}}{=} h(\bar{\vt{x}}) \le f(\bar{\vt{x}},\bar{\alpha},\bar{\vt{\mu}}) = \inf_{(\vt{x},\alpha,\vt{\mu})\in F}f(\vt{x},\alpha,\vt{\mu})
  \overset{\eqref{eq:otdro-values}}{=} \inf_{\vt{x} \in \Delta^{n}}g(\vt{x}) \le g(\bar{\vt{x}}).
\end{align*}
Thus, all the inequalities hold with equality.
Therefore, $\bar{\vt{x}}$ is an optimal solution of~\eqref{eq:otdro}.\qed
\end{proof}

\subsection{Proof of Threshold Condition of OT-DRO}
\label{app:threshold_otdro}
By Theorem~\ref{thm:otdro-equiv}, $\vt{x}^*$ is optimal for OT-DRO if and only if there exist $(\alpha^*, \vt{\mu}^*)$ such that $(\vt{x}^*, \alpha^*, \vt{\mu}^*)$ is optimal for~\eqref{eq:otdro-reform}. We therefore establish the threshold condition for~\eqref{eq:otdro-reform}. Since~\eqref{eq:otdro-reform} is convex and satisfies Slater's condition, its KKT conditions are necessary and sufficient for optimality.

We introduce the Lagrangian for problem~\eqref{eq:otdro-reform}.
Let $\gamma\in \bb{R}$ be the Lagrange multiplier for the constraint~\eqref{eq:otdro-reform-con1}, $\pi_{ij}\ge0$ be the multiplier for the constraint~\eqref{eq:otdro-reform-con2}, and $\omega \ge 0$ be the multiplier for the constraint $\alpha \ge 0$,
then the Lagrangian $L(\vt{x},\alpha,\vt{\mu}, \gamma, \vt{\pi},\omega)$ for problem~\eqref{eq:otdro-reform} is given by
\begin{align*}
  L(\vt{x},\alpha,\vt{\mu}, \gamma, \vt{\pi},\omega)
  &=\sum_{i=1}^{n}\hat{p}_{i}\mu_{i}+\varepsilon \alpha +\gamma\left(\sum_{j=1}^{n}x_{j}-1\right)-\sum_{i=1}^{n}\sum_{j=1}^{n}\pi_{ij}(\mu_{i}+\log{x_{j}}+c_{ij}\alpha)-\alpha\omega.
\end{align*}
By Proposition~\ref{prop:otdro-pos}, any optimal solution satisfies $x_j>0$, so no multipliers for $x_{j}\ge 0$ are needed.

At an optimal solution of~\eqref{eq:otdro-reform}, there exist multipliers $(\gamma, \vt{\pi},\omega)$ satisfying the following KKT conditions:
\begin{align*}
\gamma - \sum_{i=1}^n\frac{\pi_{ij}}{x_j} &= 0 \quad (j \in N),\\
\varepsilon - \sum_{i=1}^n \sum_{j=1}^n c_{ij} \pi_{ij}-\omega &= 0,\\
\hat{p}_{i} - \sum_{j=1}^{n} \pi_{ij} &= 0 \quad (i \in N),\\
\pi_{ij} \left(\mu_i+\log x_j +c_{ij}\alpha \right)&= 0 \quad (i, j \in N),\\
\alpha \omega &= 0.
\end{align*}

Multiplying the first condition by $x_{j}$ and summing over $j \in N$ gives $\gamma\sum_{j=1}^{n}x_{j} = \sum_{j=1}^{n}\sum_{i=1}^{n}\pi_{ij} = \sum_{i=1}^{n}\hat{p}_{i} = 1$ by the third condition; since $\sum_{j=1}^{n}x_{j}=1$, it follows that $\gamma = 1$.
Thus, the optimal solution satisfies the following equations:
\begin{align}
x_j &= \sum_{i=1}^n \pi_{ij} \quad (j \in N),\label{eq:ot_kkt1_app}\\
\varepsilon - \omega &= \sum_{i=1}^n \sum_{j=1}^n c_{ij} \pi_{ij},\label{eq:ot_kkt2_app}\\
\hat{p}_{i} &= \sum_{j=1}^{n} \pi_{ij} \quad (i \in N).\label{eq:ot_kkt3_app}
\end{align}
Equations~\eqref{eq:ot_kkt1_app} and \eqref{eq:ot_kkt3_app} indicate that the Lagrange multipliers $\pi_{ij}$ form a transport plan $\vt{\pi} = (\pi_{ij}) \in \bb{R}^{n \times n}$ from the empirical distribution $\hat{\vt{p}}$ to the optimal solution $\vt{x}$.

\begin{proof}[of Theorem~\ref{thm:otdro-threshold}]
($\Rightarrow$) Suppose that the optimal solution of~\eqref{eq:otdro-reform} is the uniform distribution $\vt{x} = \vt{1}_n/n$. Since it satisfies the KKT conditions \eqref{eq:ot_kkt1_app}, \eqref{eq:ot_kkt2_app}, and \eqref{eq:ot_kkt3_app}, we have
\begin{align*}
  d_{\mathrm{OT}}(\vt{1}_n/n, \hat{\vt{p}}) \le \sum_{i=1}^{n}\sum_{j=1}^{n}c_{ij}\pi_{ij} = \varepsilon - \omega \le \varepsilon.
\end{align*}

($\Leftarrow$) Conversely, suppose $d_{\mathrm{OT}}(\vt{1}_n/n, \hat{\vt{p}}) \le \varepsilon$.
Then, there exists a transport plan from $\hat{\vt{p}}$ to $\vt{1}_n/n$ satisfying
\begin{align*}
  \sum_{i=1}^{n}\sum_{j=1}^{n}c_{ij}\pi_{ij} &\le \varepsilon,\\
  \sum_{i=1}^{n}\pi_{ij} &= 1/n \quad (j \in N),\\
  \sum_{j=1}^{n}\pi_{ij} &= \hat{p}_{i} \quad (i \in N).
\end{align*}
Here, for any feasible solution $(\vt{x}, \alpha, \vt{\mu})$, it holds that $-\log x_j \le \mu_i + c_{ij}\alpha$ for $i, j \in N$. Multiplying both sides of this inequality by $\pi_{ij} \ge 0$ and taking the sum over $i$ and $j$, we obtain
\begin{align*}
\sum_{i=1}^{n}\sum_{j=1}^{n}\pi_{ij}(-\log x_{j})\le\sum_{i=1}^{n}\sum_{j=1}^{n}\pi_{ij}\mu_{i}+\sum_{i=1}^{n}\sum_{j=1}^{n}c_{ij}\alpha\pi_{ij}=\sum_{i=1}^{n}\hat{p}_{i}\mu_{i}+\alpha\sum_{i=1}^{n}\sum_{j=1}^{n}c_{ij}\pi_{ij}\le\sum_{i=1}^{n}\hat{p}_{i}\mu_{i}+\alpha\varepsilon.
\end{align*}

The right-hand side is exactly the objective function of~\eqref{eq:otdro-reform}. Looking at the left-hand side, we have
\begin{align*}
  \sum_{i=1}^{n}\sum_{j=1}^{n}\pi_{ij}(-\log x_j)=\sum_{j=1}^{n}(-\log x_j)\left(\sum_{i=1}^{n}\pi_{ij}\right)=\sum_{j=1}^{n}(-\log x_j)(1/n)\ge-\log\left(\sum_{j=1}^{n}x_j/n\right)=-\log(1/n)=\log n,
\end{align*}
where the inequality follows from Jensen's inequality.
Thus, a lower bound on the objective function of~\eqref{eq:otdro-reform} is $\log n$.
By setting $(\vt{x}, \alpha, \vt{\mu}) = (\vt{1}_n/n, 0, (\log n)\vt{1}_n)$, the objective function of~\eqref{eq:otdro-reform} becomes $\varepsilon \cdot 0 + \sum_{i=1}^{n}\hat{p}_i \log n = \log n$. Therefore, the optimal value of~\eqref{eq:otdro-reform} is $\log n$, and the uniform distribution $\vt{1}_n/n$ is an optimal solution.
By Theorem~\ref{thm:otdro-equiv}, the uniform distribution $\vt{1}_n/n$ is therefore an
optimal solution of the OT-DRO model~\eqref{eq:otdro}.
\qed

\end{proof}

\bibliographystyle{spmpsci}
\bibliography{ref}

\end{document}